\newtheorem{defin}{Definition}
\newtheorem{rem}{Remark}
\newtheorem{rems}{Remarks}
\newtheorem{prop}{Proposition}
\newtheorem{corl}{Corollary}
\def\Z{\mathbb Z}
\newcommand{\N}{\mathbb N}
\newcommand{\R}{\mathbb R}
\begin{document}

\title*{Discrete approximation to solution flows of Tanaka's SDE related to Walsh Brownian motion}
\titlerunning{Approximation to flows of Tanaka's SDE}
\author{Hatem Hajri}
\institute{Hatem Hajri \at Universit\'e Paris Sud 11, \email{Hatem.Hajri@math.u-psud.fr}}
%
%
\maketitle

\abstract*{In a previous work, we have defined a Tanaka's SDE related to Walsh Brownian motion which depends on kernels. It was shown that there are only one Wiener solution and only one flow of mappings solving this equation. In the terminology of Le Jan and Raimond, these are respectively the stronger and the weaker among all solutions. In this paper, we obtain these solutions as limits of  discrete models.}

\abstract{In a previous work, we have defined a Tanaka's SDE related to Walsh Brownian motion which depends on kernels. It was shown that there are only one Wiener solution and only one flow of mappings solving this equation. In the terminology of Le Jan and Raimond, these are respectively the stronger and the weaker among all solutions. In this paper, we obtain these solutions as limits of  discrete models.}

\section{Introduction and main results}
Consider Tanaka's equation:
\begin{equation}\label{ferjeni}
\varphi_{s,t}(x)=x+\int_{s}^t \textrm{sgn}(\varphi_{s,u}(x))dW_u,\ \ s\leq t, x\in\R,
\end{equation}
where ${\textrm{sgn}}(x)=1_{\{x>0\}}-1_{\{x\leq0\}}, W_t=W_{0,t}1_{\{t>0\}}-W_{t,0} 1_{\{t\leq0\}}$ and $(W_{s,t}, s\leq t)$ is a real white noise on a probability space $(\Omega,\mathcal A,\mathbb P)$ (see Definition 1.10 \cite{MR2060298}). This is an example of a stochastic differential equation which admits a weak solution but has no strong solution. If $K$ is a stochastic flow of kernels (see Section 2.1 \cite{MR5010101}) and $W$ is a real white noise, then by definition, $(K,W)$ is a solution of Tanaka's SDE if for all $s\leq t, x\in\R$, $f\in C^2_b(\R)$ ($f$ is $C^2$ on $\R$ and $f',f''$ are bounded), 
\begin{equation}\label{dom}
K_{s,t}f(x)=f(x)+\int_s^tK_{s,u}(f'{\textrm{sgn}})(x)W(du)+\frac{1}{2}\int_s^t K_{s,u}f''(x)du\ \ a.s.
\end{equation}
When $K=\delta_{\varphi}$ is a flow of mappings, $K$ solves (\ref{dom}) if and only if $\varphi$ solves (\ref{ferjeni}) by It\^o's formula. In \cite{MR2235172}, Le Jan and Raimond have constructed the unique flow of mappings associated to (\ref{ferjeni}). It was shown also that 
\[
K^{W}_{s,t}(x)=\delta_{x+\textrm{sgn}(x)W_{s,t}} 1_{\{t\leq \tau_{s,x}\}}+\frac{1}{2}(\delta_{W^+_{s,t}}+\delta_{-W^+_{s,t}}) 1_{\{t>\tau_{s,x}\}},\ \ s\leq t, x\in\R,
\]
is the unique $\mathcal F^W$ adapted solution (Wiener flow) of (\ref{dom}) where $$\tau_{s,x}=\inf\{r\geq s: W_{s,r}=-|x|\}, \ W_{s,t}^+:=W_{s,t}-\displaystyle\inf_{u\in[s,t]}W_{s,u}.$$
In \cite{MR5010101}, an extension of (\ref{dom}) in the case of Walsh Brownian motion was defined as follows
\begin{defin}
Fix $N\in{\N}^{*}$, $\alpha_1,\cdots,\alpha_N > 0$ such that $\displaystyle{\sum_ {i=1}^{N}\alpha_i=1}$ and consider the graph $G$ consisting of $N$ half lines $(D_i)_{1\leq i\leq N}$ emanating from $0$ (see Figure 1). 

\begin{figure}[h]
\begin{center}
\resizebox{8cm}{7cm}{\input{fig_tanaka1.pstex_t}}
\caption{Graph $G$.}
\end{center}
\end{figure}

Let $\vec{e}_i$ be a vector of modulus $1$ such that $D_i=\{h\vec{e}_i,h\geqslant 0\}$ and define for all $z\in G$,  $\vec{e}(z)=\vec{e}_i$ if $z\in D_i, z\neq0$ (convention $\vec{e}(0)=\vec{e}_N$). Define the following distance on $G$:
\\$$\begin{array}{ll}
d(h\vec{e}_i,h'\vec{e}_j)=\begin{cases}
h+h'&\text{if}\  i\neq j, (h,h')\in \R_+^2,\\
|h-h'|\ &\text {if}\ i=j, (h,h')\in \R_+^2.\\
\end{cases}
\end{array}$$
For $x\in G$, we will use the simplified notation $|x|:=d(x,0)$.\\
We equip $G$ with its Borel $\sigma$-field $\mathcal{B}(G)$ and set $G^*=G\setminus\{0\}$. Let $C_b^2(G^*)$ be the space of all $f:G\longrightarrow\R$ such that $f$ is continuous on $G$ and has bounded first and second derivatives ($f'$ and $f''$) on $G^*$ (here $f'(z)$ is the derivative of $f$ at $z$ in the direction $\vec{e}(z)$ for all $z\neq0$), both $\lim_{z\rightarrow0, z\in D_i, z\neq 0}f'(z)$ and $\lim_{z\rightarrow0, z\in D_i, z\neq 0}f''(z)$ exist for all $i\in [1,N]$. Define
$$D(\alpha_1,\cdots,\alpha_N)=\left\{f\in C_b^2(G^*): \displaystyle\sum_{i=1}^{N}\alpha_i \lim_{z\rightarrow0, z\in D_i, z\neq 0}f'(z)=0\right\}.$$
 Now, Tanaka's SDE on $G$ extended to kernels is the following (see Remarks 3 (1) in \cite{MR5010101} for a discussion of its origin).\\
\textbf{Tanaka's equation on $G$.}
On a probability space $(\Omega,\mathcal A,\mathbb P)$, let $W$ be a real white noise and $K$ be a stochastic flow of kernels on $G$. We say that $(K,W)$ solves $(T)$ if for all $s\leq t, f\in D(\alpha_1,\cdots,\alpha_N), x\in G$,
$$K_{s,t}f(x)=f(x)+\int_s^tK_{s,u}f'(x)W(du) + \frac{1}{2}\int_s^tK_{s,u}f''(x)du\ \ a.s.$$
If $K=\delta_\varphi$ is a solution of $(T)$, we just say that $(\varphi,W)$ solves $(T)$.
\end{defin}
\noindent Equation $(T)$ is a particular case of an equation $(E)$ studied in \cite{MR5010101} (it corresponds to $\varepsilon=1$ with the notations of \cite{MR5010101}). It was shown (see Corollary 2 \cite{MR5010101}) that if $(K,W)$ solves $(T)$, then $\sigma(W)\subset\sigma(K)$ and therefore one can just say that $K$ solves $(T)$. We also recall
\begin{theorem} \cite{MR5010101}\label{3li}
 There exists a unique Wiener flow $K^W$ (resp. flow of mappings $\varphi$) which solves $(T)$.
\end{theorem}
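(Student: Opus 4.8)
The statement has two halves --- existence and uniqueness of a Wiener flow, and existence and uniqueness in law of a flow of mappings --- and I would treat them in that order, the Wiener flow being the more explicit.

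\textbf{The Wiener flow.} I would first write down the natural candidate, by analogy with the classical formula recalled above: with $\tau_{s,x}=\inf\{r\geq s:W_{s,r}=-|x|\}$ and $W^+_{s,t}=W_{s,t}-\inf_{u\in[s,t]}W_{s,u}$, set
\[
K^W_{s,t}(x)=\delta_{(|x|+W_{s,t})\vec e(x)}\,1_{\{t\leq\tau_{s,x}\}}+\Big(\sum_{i=1}^N\alpha_i\,\delta_{W^+_{s,t}\vec e_i}\Big)1_{\{t>\tau_{s,x}\}}.
\]
Then I would check that $K^W$ is a stochastic flow of kernels on $G$ (measurability and normalization are immediate; the cocycle identity $K^W_{u,t}K^W_{s,u}=K^W_{s,t}$ needs care only at $t=\tau_{s,x}$, where the mass splits once and for all into the $\alpha_i$-weighted combination, and the identity then follows because $W^+$ is itself the Skorokhod reflection of $W$, so restarting the formula at an intermediate $u>\tau_{s,x}$ reproduces the same object). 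Next I would verify $(T)$: for $f\in D(\alpha_1,\dots,\alpha_N)$ write $f_i(r)=f(r\vec e_i)$, apply It\^o's formula to $f_i(W^+_{s,\cdot})$ on $\{t>\tau_{s,x}\}$, and observe that the boundary term carries the factor $f_i'(0+)$; summing against the weights produces $\sum_i\alpha_i f_i'(0+)$, which vanishes by the very definition of $D(\alpha_1,\dots,\alpha_N)$, leaving exactly the drift $\tfrac12 K^W_{s,t}f''(x)\,dt$ and the martingale term $K^W_{s,t}f'(x)\,W(dt)$; on $\{t\le\tau_{s,x}\}$ there is no local time and It\^o gives the same thing. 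Adaptedness to $\mathcal F^W$ is clear. For uniqueness, I would use Corollary 2 of \cite{MR5010101} (any solution $K$ has $\sigma(W)\subset\sigma(K)$), so a Wiener solution satisfies $\sigma(K)=\sigma(W)$; then $t\mapsto K_{s,t}(|\cdot|)(x)$ is an $\mathcal F^W$-adapted solution of the one-dimensional Tanaka/Skorokhod problem and hence must equal the reflected Brownian motion of $W$ started at $|x|$; once the modulus is pinned down to be $\mathcal F^W$-measurable, the mass placed on each ray $D_i$ during an excursion is recovered by a short filtering computation against the ($\mathcal F^W$-measurable) excursion structure, forcing the weights $\alpha_i$ and hence $K=K^W$.

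\textbf{The flow of mappings.} For existence I would build $\varphi$ by hand on an enlarged space carrying $W$ together with an independent family of $\{1,\dots,N\}$-valued labels of law $(\alpha_i)$ attached to the excursions: let $|\varphi_{s,t}(x)|$ be the Skorokhod reflection $(|x|+W_{s,t})-\big(\inf_{u\in[s,t]}(|x|+W_{s,u})\big)\wedge 0$, keep $\varphi_{s,\cdot}(x)$ on the ray $\vec e(x)$ up to its first hit of $0$, and on each subsequent excursion put it on the ray indexed by the corresponding label. The facts to check are: (i) the one-point motion is a Walsh Brownian motion with weights $(\alpha_i)$ --- for which one needs the standard fact that its ray local times are $\alpha_i$ times the local time at $0$ of the modulus --- so that $(\varphi,W)$ solves $(T)$ by the same It\^o computation as above; and (ii) the cocycle property $\varphi_{u,t}\circ\varphi_{s,u}=\varphi_{s,t}$, which reduces to coalescence: two trajectories $\varphi_{s,\cdot}(x),\varphi_{s,\cdot}(y)$ reach $0$ simultaneously, at the time $W_{s,\cdot}$ first attains $-\max(|x|,|y|)$ --- on the same ray because the two Skorokhod reflections coalesce exactly when their local times have differed by $\big||x|-|y|\big|$, and on different rays directly from the reflection formula --- after which they share every label and so agree. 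Carrying the label mechanism out consistently for all triples $s\le u\le t$ simultaneously is the technical core of this step. For uniqueness in law, I would use that a flow of mappings is determined by its consistent family of $n$-point motions; equation $(T)$ forces each one-point motion to be a Walsh Brownian motion (one-dimensional Tanaka uniqueness for the modulus, plus identification of the generator on $D(\alpha_1,\dots,\alpha_N)$) and forces any two coordinates to coalesce when they meet (since $\varphi_{s,t}(x)=\varphi_{s,t}(y)$ as soon as both moduli vanish, and the pair must reach $(0,0)$); the $n$-point motion is then the unique coalescing system of Walsh Brownian motions, and well-posedness of this martingale problem --- in particular the joint behaviour of several coordinates sitting at the branch point $0$ --- is pinned down by the same excursion description.

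\textbf{Main obstacle.} I expect the delicate point to be the uniqueness in law of the flow of mappings: turning ``$(\varphi,W)$ solves $(T)$'' into a genuinely well-posed martingale problem for the $n$-point motion, and in particular controlling the joint law of coordinates that simultaneously occupy the branch point, where the weights $(\alpha_i)$ and the coalescing rule interact. For the Wiener flow the analogous difficulty --- extracting the explicit formula from $\mathcal F^W$-adaptedness alone --- is milder, since once the modulus is recognized as the reflected Brownian motion of $W$ the splitting weights follow from filtering. An alternative, more abstract route for existence of both objects would be to produce the consistent compatible family of $n$-point Feller semigroups (coalescing Walsh Brownian motions) and invoke the construction theorems of Le Jan--Raimond, with uniqueness again coming from uniqueness of the $n$-point motions.
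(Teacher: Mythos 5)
First, a point of reference: this paper does not prove Theorem~\ref{3li} at all --- it is imported verbatim from \cite{MR5010101}, and the present paper only recalls the explicit formula (\ref{loop}) for $K^W$. What the paper does prove later (inside the proof of Theorem~\ref{lamjad}) is a re-derivation by a quite different route: existence of $\varphi$ comes from the discrete Csaki--Vincze-based flow $\varPsi$ and its scaling limit, the verification of $(T)$ uses the Freidlin--Sheu formula, and --- crucially --- uniqueness of the flow of mappings is \emph{not} obtained through an $n$-point martingale problem. Instead the paper argues: by Lemma 6 of \cite{MR5010101}, any solution satisfies $\psi_{0,t}(x)=x+\vec e(x)W_{0,t}$ up to $\tau_{0,x}$; filtering $\delta_\psi$ by $\sigma(W)$ gives a Wiener solution, which by the (cited) uniqueness of the Wiener solution must be $K^W$, so $|\psi_{0,t}(0)|=W^+_{0,t}$; a strong-Markov lemma then forces coalescence $\psi_{0,r}(x)=\psi_{0,r}(0)$ for $r\geq\tau_{0,x}$; finally, since $W$ is recoverable from $W^+$, each $\psi_{0,\cdot}(x)$ is a measurable functional of the single trajectory $\psi_{0,\cdot}(0)$, so all $n$-point laws are determined by the one-point Walsh law. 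This bypasses exactly the step you yourself flag as the main obstacle.

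That step is where your sketch has genuine gaps rather than omitted routine work. (a) For Wiener uniqueness, ``the mass placed on each ray is recovered by a short filtering computation, forcing the weights $\alpha_i$'' is not an argument: a general Wiener solution is a $\sigma(W)$-measurable kernel, not a priori the conditional law of any process given $W$, so you cannot read the ray weights off the one-point motion; the actual identification (Theorem 1 / Proposition 8 of \cite{MR5010101}, which this paper simply invokes) comes from iterating the SDE $(T)$ to pin down $K_{s,t}f(x)$ as a functional of $W$ (a chaos-expansion-type argument in the Le Jan--Raimond spirit), not from Skorokhod reflection plus filtering. (b) For the flow of mappings, you reduce uniqueness in law to ``well-posedness of the coalescing $n$-point Walsh martingale problem'' and then assert it is ``pinned down by the same excursion description''; controlling several coordinates at the branch point is precisely the difficulty, and no mechanism is given. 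Your existence constructions are fine in spirit --- your candidate $K^W$ is exactly (\ref{loop}), and labelling excursions with i.i.d.\ $(\alpha_i)$-rays is essentially how $\varphi$ is built --- but making the labelling consistent for all $s\leq u\leq t$ simultaneously is nontrivial in continuous time, and it is exactly what the paper's discrete model buys: the cocycle identity is trivial for $\varPsi$ and is then transported to the limit via (\ref{rhayem}). So the proposal is a plausible outline, but at the two decisive points (Wiener uniqueness, $n$-point uniqueness) it asserts what must be proved, and the paper's own treatment resolves both by different arguments (citation to \cite{MR5010101} plus the measurable-functional reduction), not by the ones you propose.
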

\noindent As described in Theorem 1 \cite{MR5010101}, the unique Wiener solution of $(T)$ is simply
\begin{equation}\label{loop}
K_{s,t}^{W}(x)=\delta_{x+\vec{e}(x)W_{s,t}}1_{\{t\leq \tau_{s,x}\}}+\sum_{i=1}^{N}\alpha_i\delta_{\vec{e}_iW^+_{s,t}}1_{\{t> \tau_{s,x}\}}.
\end{equation}
where
\begin{equation}\label{didi}
\tau_{s,x}=\inf\{r\geq s: x+\vec{e}(x)W_{s,r}=0\}=\inf\{r\geq s: W_{s,r}=-|x|\}.
\end{equation}
\noindent However, the construction of the unique flow of mappings $\varphi$ associated to $(T)$ relies on flipping Brownian excursions and is more complicated.  Another construction of $\varphi$ using Kolmogorov extension theorem can be derived from Section 4.1 \cite{MR2235172} similarly to Tanaka's equation. Here, we restrict our attention to discrete models.\\
The one point motion associated to any solution of $(T)$ is the Walsh Brownian motion $W(\alpha_1,\cdots,\alpha_N)$ on $G$ (see Proposition 3 \cite{MR5010101}) which we define as a strong Markov process with c\`adl\`ag paths, state space $G$ and Feller semigroup $(P_t)_{t\geq0}$ as given in Section 2.2 \cite{MR5010101}. When $N=2$, it corresponds to the famous skew Brownian motion \cite{MR606993}.\\
Our first result is the following Donsker approximation of $W(\alpha_1,\cdots,\alpha_N)$ 
\begin{prop}\label{enri}
 Let $M = (M_n)_{n\geq 0}$ be a Markov chain on $G$ started at $0$ with stochastic matrix $Q$ given by:
\begin{equation}\label{sirine}
Q(0,\vec{e}_{i})=\alpha_i,\ \ Q(n\vec{e}_{i},(n+1)\vec{e}_{i})=Q(n\vec{e}_{i},(n-1)\vec{e}_{i})=\frac{1}{2}\ \ \forall i\in [1,N], n\in {\N^{*}}.
\end{equation}
Let $t\longmapsto M(t)$ be the linear interpolation of $(M_n)_{n\geq 0}$ and $M^n_t=\frac{1}{\sqrt{n}}M(nt), n\geq 1$. Then
$$(M^n_t)_{t\geq0}\xrightarrow[\text{$n\rightarrow +\infty $}]{\text{law}}(Z_t)_{t\geq0}$$ in $C([0,+\infty[,G)$ where $Z$ is an $W(\alpha_1,\cdots,\alpha_N)$ started at $0$.
\end{prop}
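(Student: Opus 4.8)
The plan is to prove that $(M^n)_{n\ge1}$ is tight in $C([0,+\infty[,G)$ and that every subsequential limit solves the martingale problem for the generator of $W(\alpha_1,\cdots,\alpha_N)$, which pins down its law; tightness together with a unique limit point then gives the claim.

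\textbf{Step 1 (tightness).} I would first reduce to the one-dimensional case through the radial part $|M^n_t|=\tfrac1{\sqrt n}|M(nt)|$. Since any two consecutive values $M_k,M_{k+1}$ of the chain either lie on a common ray $D_i$ or one of them is $0$, the interpolating segment always stays inside a single closed ray, so $t\mapsto|M(nt)|$ is exactly the linear interpolation of the sequence $(|M_k|)_{k\ge0}$. The latter is a Markov chain on $\N$ that jumps to $|M_k|\pm1$ with probability $\tfrac12$ from states $\ge1$ and from $0$ moves to $1$ surely; hence $(|M_k|)_k$ has the same law as $(|S_k|)_k$ for a simple random walk $S$ on $\Z$ started at $0$, and by Donsker's theorem and continuity of $x\mapsto|x|$, $(|M^n_t|)_{t\ge0}$ converges in law in $C([0,+\infty[,\R_+)$ to reflecting Brownian motion $|B|$; in particular it is tight. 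Now, for $s\le t$, either $M^n_s$ and $M^n_t$ lie on a common ray, in which case $d(M^n_s,M^n_t)=\big||M^n_s|-|M^n_t|\big|$, or they lie on two distinct open rays, in which case the continuous path $M^n$ must vanish at some $r\in[s,t]$ (the open rays being the connected components of $G^*$), so $d(M^n_s,M^n_t)\le|M^n_s|+|M^n_t|=\big||M^n_s|-|M^n_r|\big|+\big||M^n_r|-|M^n_t|\big|$. In both cases the oscillation of $M^n$ on $[s,t]$ is at most twice that of $|M^n|$, so tightness of $(|M^n|)$ and $M^n_0=0$ yield tightness of $(M^n)$ in $C([0,+\infty[,G)$.

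\textbf{Step 2 (identification).} Let $Z$ be a subsequential limit, so along a subsequence $M^n\to Z$ in law with $Z$ having continuous paths; by Step 1, $|Z|$ is a reflecting Brownian motion, hence a.s. $\mathrm{Leb}\{s\ge0:Z_s=0\}=0$. Fix $f\in D(\alpha_1,\cdots,\alpha_N)$ and set $g:=f(\cdot/\sqrt n)$ on the discrete graph. By Dynkin's formula, $f(M^n_{k/n})-f(0)-\sum_{j=0}^{k-1}(Q-I)g(M_j)$ is a martingale in $k$. A second-order Taylor expansion, uniform over the (a.s.\ bounded, by tightness) region explored, gives $(Q-I)g(m\vec e_i)=\tfrac1{2n}f''(m\vec e_i/\sqrt n)+o(1/n)$ for $m\ge1$, while at the vertex, writing $f_i'(0),f_i''(0)$ for the one-sided derivatives along $D_i$,
\[
(Q-I)g(0)=\sum_{i=1}^N\alpha_i\big(f(\vec e_i/\sqrt n)-f(0)\big)=\frac1{\sqrt n}\sum_{i=1}^N\alpha_i f_i'(0)+\frac1{2n}\sum_{i=1}^N\alpha_i f_i''(0)+o(1/n),
\]
and the first term vanishes precisely because $f\in D(\alpha_1,\cdots,\alpha_N)$. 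Since $\mathbb E\big[\#\{j<nt:M_j=0\}\big]$ is of order $\sqrt n$ (local central limit theorem for $|S|$), the total contribution of the visits to $0$ in $\sum_{j<nt}(Q-I)g(M_j)$ is of order $1/\sqrt n$ in $L^1$, hence negligible, while the remaining terms form, by a standard argument using boundedness and continuity of $f''$ on $G^*$, the same local-time estimate, and the fact that $Z$ spends no time at $0$, a Riemann sum converging to $\tfrac12\int_0^tf''(Z_s)\,ds$. Passing to the limit, and using the usual stability of uniformly bounded martingales under weak convergence, one obtains that $f(Z_t)-f(Z_0)-\tfrac12\int_0^tf''(Z_s)\,ds$ is a martingale for every $f\in D(\alpha_1,\cdots,\alpha_N)$, i.e.\ $Z$ solves the martingale problem for the generator of $W(\alpha_1,\cdots,\alpha_N)$ (which acts as $\tfrac12 f''$ on the core $D(\alpha_1,\cdots,\alpha_N)$; see Section 2.2 of \cite{MR5010101}). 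By well-posedness of this martingale problem, $Z$ is a $W(\alpha_1,\cdots,\alpha_N)$ started at $0$, and since every subsequential limit has this law, tightness gives the announced convergence.

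\textbf{Main obstacle.} The delicate point is the behaviour at the vertex $0$ in Step 2: the discrete chain visits $0$ about $\sqrt n$ times before time $nt$, and one must show these visits create no spurious drift (which would surface as a local-time term in the limit). This is exactly where the defining relation $\sum_i\alpha_i f_i'(0)=0$ of $D(\alpha_1,\cdots,\alpha_N)$ is used, cancelling the $O(1/\sqrt n)$ per-visit increment that would otherwise, multiplied by the $\sim\sqrt n$ visits, survive in the limit; the quantitative local-time bound and the Taylor expansion at $0$ therefore constitute the technical core, everything else (tightness, Riemann-sum convergence, passage to the limit for martingales) being routine.
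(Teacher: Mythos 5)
Your argument is correct in outline, but it takes a genuinely different route from the paper. You prove tightness through the radial part (noting $|M_k|\overset{law}{=}|S_k|$ and that the oscillation of $M^n$ in the metric $d$ is controlled by that of $|M^n|$) and then identify every subsequential limit via the martingale problem for $f\mapsto\frac12 f''$ on $D(\alpha_1,\cdots,\alpha_N)$, with the vertex condition $\sum_i\alpha_i f_i'(0)=0$ killing the $O(1/\sqrt n)$ per-visit drift accumulated over the $O(\sqrt n)$ visits to $0$. The paper instead starts from a Walsh Brownian motion $Z$, projects it onto each ray via $Z^i=\varPhi^i(Z)$ (a skew Brownian motion of parameter $\alpha_i$), embeds the chain as $Z$ sampled at the stopping times $T^n_k$ so that $(\sqrt n\,Z_{T^n_k})_k\overset{law}{=}M$, and then bounds $\sup_{t\le T}d(Z_t,\tfrac1{\sqrt n}Z^n_{\lfloor nt\rfloor})$ by $\sum_i\sup_{t\le T}|Z^i_t-\tfrac1{\sqrt n}Z^{n,i}_{\lfloor nt\rfloor}|$, which tends to $0$ in probability by Lemma 4.4 of Cherny--Shiryaev--Yor; in other words it deduces the $N$-ray statement directly from the known $N=2$ (skew Brownian) Donsker theorem. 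The paper's approach buys brevity and a coupling-type statement (convergence in probability of the embedded walk to a given $Z$), and it avoids generator machinery entirely; your approach is more self-contained and generalizable, but its two load-bearing points are exactly the ones you sketch rather than prove: (a) well-posedness (uniqueness) of the martingale problem for Walsh Brownian motion with test functions in $D(\alpha_1,\cdots,\alpha_N)$, which is true but must be cited precisely or derived from $D(\alpha_1,\cdots,\alpha_N)$ being a core of the Feller generator, and (b) the quantitative vertex/occupation estimates (expected visits to $0$ of order $\sqrt n$, occupation near $0$ of order $\epsilon n$ to handle the discontinuity of $f''$ at the vertex in the Riemann sum). Both are standard, so I regard your proposal as a valid alternative proof strategy rather than a gap, provided those two points are written out.
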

This result extends that of \cite{MR1838739} who treated the case $\alpha_1=\cdots=\alpha_N=\frac{1}{N}$ and of course the Donsker theorem for the skew Brownian motion (see \cite{MR1975425} for example). We show in fact that Proposition \ref{enri} can be deduced immediately from the case $N=2$.\\
In this paper we study the approximation of flows associated to $(T)$. Among recent papers on the approximation of flows, let us mention \cite{6} where the author construct an approximation for the Harris flow and the Arratia flow.\\
Let $G_{\mathbb N}=\{x\in G; |x|\in \mathbb N\}$ and $\mathcal P(G)$ (resp. $\mathcal P(G_{\mathbb N})$) be the space of all probability measures on $G$ (resp. $G_{\mathbb N}$). We now come to the discrete description of $(\varphi, K^W)$ and introduce
\begin{defin} (Discrete flows)
We say that a process $\psi_{p,q}(x)$ (resp. $N_{p,q}(x)$) indexed by $\{p\leq q\in\Z, x\in G_{\mathbb N}\}$ with values in $G_{\mathbb N}$ (resp. $\mathcal P(G_{\mathbb N})$) is a discrete flow of mappings (resp. kernels) on $G_{\mathbb N}$ if:\\
(i) The family $\{\psi_{i,i+1}; i\in\Z\}$ (resp. $\{N_{i,i+1}; i\in\Z\}$) is independent.\\
$(ii) \forall p\in\Z, x\in G_{\mathbb N}, \psi_{p,p+2}(x)=\psi_{p+1,p+2}(\psi_{p,p+1}(x))\\ (\textrm{resp.}\ N_{p,p+2}(x)=N_{p,p+1}N_{p+1,p+2}(x))\ \textrm{a.s.}$
where 
$$N_{p,p+1}N_{p+1,p+2}(x,A):=\sum_{y\in G_{\mathbb N}}N_{p+1,p+2}(y,A)N_{p,p+1}(x,\{y\})\ \textrm{for all}\ A\subset G_{\mathbb N}.$$
We call (ii), the cocycle or flow property. 
\end{defin}
The main difficulty in the construction of the flow $\varphi$ associated to (\ref{ferjeni}) \cite{MR2235172} is that it has to keep the consistency of the flow. This problem does not arise in discrete time. Starting from the following two remarks, 
\begin{itemize}
\item[(i)] $\varphi_{s,t}(x)=x+\textrm{sgn}(x)W_{s,t}$ if $s\leq t\leq \tau_{s,x}$,
\item[(ii)] $|\varphi_{s,t}(0)|=W^+_{s,t}$ and $\textrm{sgn}(\varphi_{s,t}(0))$ is independent of $W$ for all $s\leq t$,
\end{itemize}
one can easily expect the discrete analogous of $\varphi$ as follows: consider an original random walk $S$ and a family of signs $(\eta_i)$ which are independent. Then
\begin{itemize}
\item[(i)] a particle at time $k$ and position $n\neq0$, just follows what the $S_{k+1}-S_k$ tells him (goes to $n+1$ if $S_{k+1}-S_k=1$ and to $n-1$ if $S_{k+1}-S_k=-1$),
\item[(ii)]  a particle at $0$ at time $k$ does not move if $S_{k+1}-S_k=-1$, and moves according to $\eta_k$ if $S_{k+1}-S_k=1$.
\end{itemize}
\noindent The situation on a finite half-lines is very close. Let $S=(S_n)_{n\in\Z}$ be a simple random walk on ${\Z}$, that is $(S_n)_{n\in\N}$ and $(S_{-n})_{n\in\N}$ are two independent simple random walks on $\Z$ and $(\vec{\eta}_i)_{i\in{\Z}}$ be a sequence of i.i.d random variables with law $\displaystyle{\sum_ {i=1}^{N}\alpha_i}\delta_{\vec{e}_i}$ which is independent of $S$. For $p\leq n$, set
$$S_{p,n}=S_n-S_p,\ S^{+}_{p,n}=S_n-\min_{h\in[p,n]} S_h=S_{p,n}-\min_{h\in[p,n]} S_{p,h}.$$
and for $p\in\Z, x\in G_{\N}$, define 
$$\varPsi_{p,p+1}(x)=x+\vec{e}(x)S_{p,p+1}\ \textrm{if}\ \ x\neq 0, \varPsi_{p,p+1}(0)=\vec{\eta}_p S^{+}_{p,p+1}.$$
$$K_{p,p+1}(x)=\delta_{x+\vec{e}(x)S_{p,p+1}}\ \textrm{if}\ \ x\neq 0, K_{p,p+1}(0)=\sum_{i=1}^{N}\alpha_i\delta_{S^{+}_{p,p+1}{\vec{e}_i}}.$$
In particular, we have $K_{p,p+1}(x)=E[\delta_{\varPsi_{p,p+1}(x)}|\sigma(S)]$. Now we extend this definition for all $p\leq n\in\Z, x\in G_{\N}$ by setting
$$\varPsi_{p,n}(x)=x1_{\{p=n\}}+\varPsi_{n-1,n}\circ \varPsi_{n-2,n-1}\circ\cdots\circ \varPsi_{p,p+1}(x)1_{\{p>n\}},$$
$$K_{p,n}(x)=\delta_x 1_{\{p=n\}}+K_{p,p+1}\cdots K_{n-2,n-1}K_{n-1,n}(x)1_{\{p>n\}}.$$
We equip $\mathcal P(G)$ with the following topology of weak convergence:
$$\beta (P, Q)=\sup\left\{|\int gdP- \int gdQ|, \| g\|_\infty+\displaystyle{\sup_{x \not =y}}\frac {|g(x)-g(y)|}{|x-y|}\leq 1, g(0) = 0\right\}.$$
In this paper, starting from $(\varPsi,K)$, we construct $(\varphi,K^W)$ and in particular show the following 
\begin{theorem}\label{lamjad}
(1) $\varPsi$ (resp. $K$) is a discrete flow of mappings (resp. kernels) on $G_{\mathbb N}$.\\
(2) There exists a joint realization $(\psi,N,\varphi,K^W)$ on a common probability space $(\Omega,\mathcal A,\mathbb P)$ such that\\
(i) $(\psi,N)\overset{law}{=}(\varPsi,K)$.\\
(ii) $(\varphi,W)$ (resp. $(K^W,W)$) is the unique flow of mappings (resp. Wiener flow) which solves $(T)$.\\
(iii) For all $ s\in\R, T>0, x\in G, x_n\in \frac{1}{\sqrt{n}}G_{\mathbb N}$ such that $\lim_{n\rightarrow\infty}x_n=x$, we have $$\lim_{n\rightarrow\infty}\sup_{s\leq t\leq s+T}|\frac{1}{\sqrt{n}}\psi_{\lfloor ns \rfloor,\lfloor nt\rfloor}(\sqrt{n}x_n)-\varphi_{s,t}(x)|=0\ \ a.s.$$
and 
\begin{equation}\label{thh}
\lim_{n\rightarrow\infty}\sup_{s\leq t\leq s+T}\beta(K_{\lfloor ns \rfloor,\lfloor nt \rfloor}(\sqrt{n}x_{n})(\sqrt{n}.),K^W_{s,t}(x))=0 \ \ a.s.
\end{equation}
\end{theorem}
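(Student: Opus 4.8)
The plan is to build the joint realization by coupling everything on the probability space carrying the simple random walk $S$ and the i.i.d.\ signs $(\vec\eta_i)$, and then to identify the scaling limits. The natural first step is part (1): checking that $\varPsi$ and $K$ are discrete flows. Independence of the one-step maps $\{\varPsi_{i,i+1}\}$ (resp.\ $\{K_{i,i+1}\}$) is immediate since each depends only on the increment $S_{i,i+1}$ and on $\vec\eta_i$, and these are independent across $i$; the cocycle property is a direct computation from the definitions, the only point requiring care being the behaviour at $0$, where one must verify that $S^+_{p,p+2}$ and the composition $\varPsi_{p+1,p+2}\circ\varPsi_{p,p+1}$ at $0$ agree, using the elementary pathwise identity $S^+_{p,p+2}=S^+_{p+1,p+2}+(S^+_{p,p+1}-\min_{[p+1,p+2]}S_{p,\cdot})^+$ type of decomposition for the reflected walk, and that the sign of $\varPsi_{p,p+2}(0)$ is governed by $\vec\eta$ at the last time the reflected walk leaves $0$.

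For part (2), I would first use Donsker-type invariance for the reflected random walk together with Proposition \ref{enri}: the rescaled walk $\frac1{\sqrt n}S(n\cdot)$ converges a.s.\ (after Skorokhod embedding, i.e.\ realizing all the $S$'s and a Brownian motion $W$ on one space) to $W$ uniformly on compacts, hence $\frac1{\sqrt n}S^+_{\lfloor ns\rfloor,\lfloor nt\rfloor}\to W^+_{s,t}$ uniformly on $\{s\le t\le s+T\}$, and the hitting times $\frac1n\tau^{\mathrm{disc}}_{\lfloor ns\rfloor,\sqrt n x_n}\to\tau_{s,x}$. On the same space I keep the signs $(\vec\eta_i)$; the key structural observation is the discrete analogue of remarks (i)--(ii): before the rescaled hitting time the trajectory $\frac1{\sqrt n}\psi_{\lfloor ns\rfloor,\lfloor nt\rfloor}(\sqrt n x_n)$ is exactly $x_n+\vec e(x_n)\frac1{\sqrt n}S_{\lfloor ns\rfloor,\lfloor nt\rfloor}$, which converges to $x+\vec e(x)W_{s,t}$, and after that time the point sits on a single ray at distance $\frac1{\sqrt n}S^+$ from $0$, with the ray determined by $\vec\eta$ at the last excursion start. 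Defining $\varphi_{s,t}(x)$ as the pathwise limit, one checks that $|\varphi_{s,t}(0)|=W^+_{s,t}$ with sign-process independent of $W$, so $(\varphi,W)$ solves $(T)$ and, by Theorem \ref{3li}, is \emph{the} flow of mappings; similarly $K^W_{s,t}(x)=E[\delta_{\varphi_{s,t}(x)}\mid\sigma(W)]$ has the form \eqref{loop} and is the unique Wiener flow. The $\beta$-convergence \eqref{thh} then follows because $\beta$ metrizes weak convergence on $\mathcal P(G)$ and $K_{\lfloor ns\rfloor,\lfloor nt\rfloor}(\sqrt n x_n)(\sqrt n\,\cdot)$ is, before the hitting time, a Dirac mass converging to a Dirac mass, and after the hitting time a fixed convex combination $\sum_i\alpha_i\delta_{\cdot}$ of Diracs on the rays whose common distance to $0$ converges.

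The main obstacle is the uniformity in $t$ over $[s,s+T]$ in the a.s.\ convergence, precisely at the transition across the hitting time $\tau_{s,x}$: on any given sample the discrete hitting time and the continuous one differ, and near the transition the discrete and continuous trajectories may momentarily sit on different rays of $G$. I would handle this by exploiting that $G^*$-excursions of the limit Brownian motion away from $0$ are a.s.\ isolated on $[s,s+T]$, that $W^+_{s,t}$ is continuous and vanishes exactly on the zero set, and that the sign choices are made at excursion starts which, for $n$ large, are matched between discrete and continuous pictures; a soft argument via the fact that the limiting flow is continuous in $(s,t,x)$ and the modulus $|\psi-\varphi|$ is dominated by $|S^+/\sqrt n - W^+|$ plus the diameter of a small neighbourhood of $0$ when both points are close to $0$, lets one pass from convergence at fixed $t$ to uniform convergence. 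Finally, establishing $(\psi,N)\overset{law}{=}(\varPsi,K)$ is routine once the coupling is set up, since by construction $\psi,N$ are built from a random walk and independent signs with the same joint law as $S,(\vec\eta_i)$.
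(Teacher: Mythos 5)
Your overall intuition is right (radial part from $S^+/\sqrt n$, directions attached to excursions, hitting--time convergence, then invoke uniqueness), but there is a genuine gap at the heart of the construction: you cannot ``define $\varphi_{s,t}(x)$ as the pathwise limit'' on a space where only the rescaled walks converge a.s.\ and the signs $(\vec\eta_i)$ are kept fixed. For fixed signs, the direction of $\frac1{\sqrt n}\varPsi_{\lfloor ns\rfloor,\lfloor nt\rfloor}$ is $\vec\eta_{J(n,t)}$ where $J(n,t)$ is the index of the excursion of $S^+_{\lfloor ns\rfloor,\cdot}$ straddling $\lfloor nt\rfloor$; this index changes with $n$, so the angular part has no a.s.\ limit in your coupling, and the limiting direction is a genuinely new random variable that must be constructed jointly with the discrete data. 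This is exactly why the paper proves tightness of the whole vector $Z^n=(S^{(n)},(\varPsi^{(n)}_{s_i,\cdot})_{i\in\N})$ indexed by dyadic starting times, passes to a subsequential limit via Skorokhod representation, and then \emph{identifies the law of the limit} (Propositions \ref{tharwet} and \ref{karama}: $|\psi_{s,t}|=W^+_{s,t}$, the directions $\vec\gamma_{s,t}$ depend only on the value of the minimum of $W$ on $[s,t]$, are independent of $W$, and are distributed as $\sum_i\alpha_i\delta_{\vec e_i}$, independently over distinct minima). That identification is what makes the limit independent of the subsequence and furnishes the joint realization $(\psi,N,\varphi,K^W)$ claimed in (2)(i); your proposal skips this step entirely.

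Two further steps are asserted rather than proved. First, from ``$|\varphi_{s,t}(0)|=W^+_{s,t}$ with sign process independent of $W$'' you jump to ``$(\varphi,W)$ solves $(T)$, hence is the unique flow of mappings''. To apply Theorem \ref{3li} you must first show that $\varphi$ \emph{is} a flow of mappings, i.e.\ the cocycle identity $\varphi_{s,u}=\varphi_{t,u}\circ\varphi_{s,t}(x)$ a.s., which in the paper rests on the compatibility relation (\ref{rhayem}) for the directions and a case analysis around the hitting time $\tau_s(x)$, and then verify the SDE for $f\in D(\alpha_1,\cdots,\alpha_N)$ via the Freidlin--Sheu formula; neither follows formally from the two facts you list. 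Second, the uniform a.s.\ convergence for an \emph{arbitrary real} $s$ is delicate precisely because the limit directions are only constructed at dyadic starting times: the paper extends them by right limits along dyadics and, in Proposition \ref{valerie}, compares the discrete flow started at $\lfloor ns\rfloor$ with the flow started at a random dyadic $s'\in\,]s,\mathcal S_p[$ chosen before the time of the minimum, so that both discrete and continuous flows coalesce with the ones started at $s'$. Your ``soft argument'' bounds the radial discrepancy but does not explain how the ray of the discrete flow is matched with the ray of $\varphi_{s,\cdot}$ across the transition, which is the actual difficulty. Part (1) and the final $\beta$-convergence for the kernels are indeed routine, as you say, and agree with the paper.
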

\noindent This theorem implies also the following
\begin{corl}\label{bess}
For all $s\in\R, x\in G_{\N}$, let $t\longmapsto\varPsi(t)$ be the linear interpolation of $\left(\varPsi_{\lfloor ns \rfloor,k}(x), k\geq \lfloor ns \rfloor\right)$ and 
$ \varPsi_{s,t}^{n}(x):=\frac{1}{\sqrt n}\varPsi(nt),\ K^n_{s,t}(x)=E[\delta_{\varPsi_{s,t}^{n}(x)}|\sigma(S)], t\geq s, n\geq 1.$
For all $1\leq p\leq q$, $(x_i)_{1\leq i\leq q}\subset G$, let $x^n_i\in \frac{1}{\sqrt{n}}G_{\mathbb N}$ such that $\lim_{n\rightarrow\infty}x^n_i=x_i$. Define
$$Y^n=\left(\varPsi^n_{s_1,\cdot}(\sqrt{n}x^n_1),\cdots,\varPsi^n_{s_p,\cdot}(\sqrt{n}x_p^n),K_{s_{p+1},\cdot}^n(\sqrt{n}x^{n}_{p+1}),\cdots,K_{s_{q},\cdot}^n(\sqrt{n}x^n_q)\right).$$
Then $$Y^n\xrightarrow[\text{$n\rightarrow +\infty $}]{\text{law}}Y\ \ \textrm{in}\ \ \displaystyle\prod_{i=1}^{p}C([s_i,+\infty[,G)\times \displaystyle\prod_{j=p+1}^{q}C([s_j,+\infty[,\mathcal P(G))$$
where $$Y=\left(\varphi_{s_1,\cdot}(x_1),\cdots,\varphi_{s_p,\cdot}(x_p),K^W_{{s_{p+1}},\cdot}(x_{p+1}),\cdots,K^W_{s_q,\cdot}(x_q)\right).$$
\end{corl}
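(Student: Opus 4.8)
The plan is to deduce Corollary~\ref{bess} directly from Theorem~\ref{lamjad}, since all the probabilistic substance — tightness and the identification of the limit as the unique flow of mappings and the unique Wiener flow of $(T)$ — is already contained in part~(2) of that theorem. First, on the joint-realization space $(\Omega,\mathcal A,\mathbb P)$ provided there, I would build from $(\psi,N)$ the exact analogue $\tilde Y^n$ of $Y^n$: the linear interpolation $t\mapsto\tilde\psi(t)$ of $(\psi_{\lfloor ns\rfloor,k}(x))_{k\ge\lfloor ns\rfloor}$, the rescaled path $\tilde\psi^n_{s,t}(x)=\frac1{\sqrt n}\tilde\psi(nt)$, and the associated continuous-time kernels $\tilde N^n_{s,t}(x)$; and from $(\varphi,K^W)$ the candidate limit $\tilde Y$. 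Because $Y^n$ is obtained from $(\varPsi,K)$ by the very same measurable recipe and $(\psi,N)\overset{law}{=}(\varPsi,K)$, one has $\tilde Y^n\overset{law}{=}Y^n$ for each $n$ and $\tilde Y\overset{law}{=}Y$, so it is enough to show $\tilde Y^n\to\tilde Y$ almost surely in the product path space.

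For the mapping coordinates, the key observation is that each elementary step of the discrete flow moves a point by at most $1$ in the metric of $G$ (along a ray when the point is nonzero, and from $0$ to $\vec\eta\,S^+_{k,k+1}\in\{0\}\cup\{\vec e_1,\dots,\vec e_N\}$ otherwise), so the interpolated path never leaves the $1/\sqrt n$-neighbourhood of the step path:
\[
\sup_{s_i\le t\le s_i+T}\bigl|\tilde\psi^n_{s_i,t}(\sqrt n x^n_i)-\tfrac1{\sqrt n}\psi_{\lfloor ns_i\rfloor,\lfloor nt\rfloor}(\sqrt n x^n_i)\bigr|\le\tfrac1{\sqrt n}.
\]
Combining this with Theorem~\ref{lamjad}(2)(iii), applied to $(s=s_i,\ x=x_i,\ x_n=x^n_i)$, gives uniform convergence on $[s_i,s_i+T]$ a.s. for every $T$, i.e. convergence in $C([s_i,+\infty[,G)$. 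For the kernel coordinates the same interpolation moves mass by at most $1/\sqrt n$ in $G$, and since the test functions in the definition of $\beta$ are $1$-Lipschitz, $\sup_{s_j\le t\le s_j+T}\beta\bigl(\tilde N^n_{s_j,t}(\sqrt n x^n_j)(\sqrt n\cdot),\,N_{\lfloor ns_j\rfloor,\lfloor nt\rfloor}(\sqrt n x^n_j)(\sqrt n\cdot)\bigr)\le 1/\sqrt n$; together with (\ref{thh}) this yields convergence in $C([s_j,+\infty[,\mathcal P(G))$. A finite intersection of almost sure events is almost sure, so all $q$ coordinates converge simultaneously a.s.; hence $\tilde Y^n\to\tilde Y$ a.s. in the product space and $Y^n$ converges to $Y$ in law.

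The only delicate point — and the step I would expect to require the most care — is the measurability invoked above: one must verify that $(\varPsi,K)\mapsto Y^n$ is measurable, in particular that the continuous-time kernel $K^n_{s,t}(x)=E[\delta_{\varPsi^n_{s,t}(x)}\mid\sigma(S)]$ is a function of $K$ alone. This holds because $\sigma(S)\subset\sigma(K)$ — from $K_{p,p+1}(\vec e_1)=\delta_{(1+S_{p,p+1})\vec e_1}$ one recovers $S_{p,p+1}$ — and because, conditionally on $\sigma(S)$, the signs $\vec\eta_k$ remain independent of the current positions, so the conditional joint law of $(\varPsi_{\lfloor ns\rfloor,k}(x),\varPsi_{\lfloor ns\rfloor,k+1}(x))$, which is all that the interpolated kernel depends on, is a measurable functional of $K$. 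Granting this, the argument is routine; the content of the corollary sits entirely inside Theorem~\ref{lamjad}.
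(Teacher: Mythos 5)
Your proposal is correct and follows essentially the same route as the paper: the corollary is deduced from the joint realization and almost sure uniform convergence in Theorem \ref{lamjad}, with the $1/\sqrt n$ interpolation error absorbed (the paper's closing lemma is exactly the interpolated-kernel statement $\sup_{t\in[s,s+T]}\beta(K^W_{s,t}(x),K^n_{s,t}(\sqrt n x_n))\to 0$ a.s., proved by the same comparison you make via (\ref{thh})). Your measurability remark — that $S$, and hence the conditional kernels, are recovered from the discrete flow so that $Y^n$ is a measurable functional transferring under the equality in law $(\psi,N)\overset{law}{=}(\varPsi,K)$ — is a sound way to justify the step the paper leaves implicit.
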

Our proof of Theorem \ref{lamjad} is based on a remarkable transformation introduced by Csaki and Vincze \cite{MR2168855} which is strongly linked with Tanaka's SDE. Let $S$ be a simple random walk on $\Z$ (SRW) and $\varepsilon$ be a Bernoulli random variable independent of $S$ (just one!). Then there exists a SRW $M$ such that 
$$\sigma(M)=\sigma(\varepsilon,S)$$
and moreover $$(\frac{1}{\sqrt{n}}{S}(nt),\frac{1}{\sqrt{n}}{M}(nt))_{t\geq 0}\xrightarrow[\text{$n\rightarrow +\infty $}]{\text{law}}(B_{t},W_{t})_{t\geq 0}\ \ \textrm{in}\ C([0,\infty[,\R^2).$$
where  $t\longmapsto S(t)$ (resp. $M(t)$) is the linear interpolation of $S$ (resp. $M$) and $B,W$ are two Brownian motions satisying Tanaka's equation $$dW_t=\textrm{sgn}(W_t)dB_t.$$
We will study this transformation with more details in Section 2 and then extend the result of Csaki and Vincze to Walsh Brownian motion (Proposition \ref{hajri11}); Let $S=(S_n)_{n\in\N}$ be a SRW and associate to $S$ the process $Y_n:=S_n-\displaystyle{\min_{k\leq n}} S_k$, flip independently every \textquotedblleft excursion \textquotedblright of $Y$ to each ray $D_i$ with probability $\alpha_i$, then the resulting process is not far from a random walk on $G$ whose law is given by (\ref{sirine}). 
In Section 3, we prove Proposition \ref{enri} and study the scaling limits of $\varPsi, K$.
\section{Csaki-Vincze transformation and consequences.} 
In this section, we review a relevant result of Csaki and Vincze and then derive some useful consequences offering a better understanding of Tanaka's equation.
\subsection{Csaki-Vincze transformation.}
\begin{theorem}\label{khali}(\cite{MR2168855} page 109)
 Let $S = (S_n)_{n\geq 0}$ be a SRW. Then, there exists a SRW $\overline {S} =(\overline {S}_n)_{n\geq 0}$ such that:$$\overline Y_n:=\displaystyle{\max_{k\leq n}} \overline S_k - \overline S_n\Rightarrow
|\overline Y_n -|S_n|| \leq 2 \ \ \forall n\in {\N}.$$
\end{theorem}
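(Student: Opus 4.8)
The plan is to construct $\overline S$ explicitly from $S$ by a pathwise recursion that builds the increments of $\overline S$ one step at a time, keeping track of a single "defect" quantity that measures how far $\overline Y_n$ is from $|S_n|$. The guiding heuristic is Tanaka/Lévy: $|S_n|$ behaves like a reflected walk, and $\max_{k\le n}\overline S_k-\overline S_n$ is by Lévy's identity exactly a reflected walk driven by $\overline S$; so we should be able to choose the increments $\overline\varepsilon_{n+1}:=\overline S_{n+1}-\overline S_n\in\{-1,+1\}$ as a deterministic function of the past of $S$ (and of $S_{n+1}$) so that the two reflected processes track each other up to a bounded error. Concretely, I would set $\overline S_0=0$ and define $\overline\varepsilon_{n+1}$ by the rule: if $|S_n|>0$, let $\overline\varepsilon_{n+1}=\varepsilon_{n+1}\cdot\mathrm{sgn}(S_n)$ where $\varepsilon_{n+1}=S_{n+1}-S_n$ (i.e.\ copy the increment of $S$, reoriented by the current sign of $S$); if $S_n=0$, use the increment $\varepsilon_{n+1}$ together with the value $\varepsilon_{n}$ (or the side $S$ came from / is going to) to decide $\overline\varepsilon_{n+1}$, in the one way that keeps the invariant below intact. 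This is the Csaki–Vincze prescription; the point of writing it out is that it is a bijective, adapted coding.

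The core of the argument is an invariant maintained by induction on $n$. Let $\overline Y_n=\max_{k\le n}\overline S_k-\overline S_n\ge 0$. I would prove that for all $n$,
$$\big|\,\overline Y_n-|S_n|\,\big|\le 2,$$
and in fact track the finer bookkeeping needed to make the induction close: one shows that the pair $(\overline Y_n-|S_n|)$ takes values in a small finite set (something like $\{-2,-1,0,1,2\}$, possibly constrained further by the parity of $n$ and by whether $S_n=0$), and that each of the four possible moves — $S$ moving toward or away from $0$, crossed with the two choices forced on $\overline\varepsilon_{n+1}$ — changes $\overline Y_{n+1}-|S_{n+1}|$ by a controlled amount, never escaping the band. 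The two places requiring care are (a) the steps where $S_n=0$, since there $|S_{n+1}|=|S_n|+1$ deterministically while $\overline Y$ may go up or down, so the defect can jump by $2$ in one step and this is exactly where the constant $2$ (rather than $1$) is needed; and (b) showing the running maximum of $\overline S$ increments at the right times so that $\overline Y$ does not drift away from $|S|$ over a long excursion — this follows because, away from $0$, $\overline\varepsilon$ is just $\pm$ the increment of $S$, so $\overline Y_{n+1}-\overline Y_n$ equals $|S_{n+1}|-|S_n|$ except possibly when a new maximum of $\overline S$ is created, and new maxima are created precisely in a way synchronized with the zeros of $S$.

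Finally I must check that $\overline S$ is genuinely a SRW. Since $\overline\varepsilon_{n+1}$ is obtained from $\varepsilon_{n+1}$ (and possibly $\varepsilon_n$) by a measurable sign-flip that depends only on quantities measurable with respect to $\sigma(\varepsilon_1,\dots,\varepsilon_n)$ and not on $\varepsilon_{n+1}$ itself — the reorientation multiplies $\varepsilon_{n+1}$ by a $\pm1$ factor determined by the past — each $\overline\varepsilon_{n+1}$ is, conditionally on the past, a symmetric $\pm1$ variable independent of the past; hence $(\overline\varepsilon_n)_{n\ge1}$ is i.i.d.\ uniform on $\{-1,+1\}$ and $\overline S$ is a SRW. (In the $S_n=0$ case one has to confirm the flip factor is still past-measurable and sign-symmetric; this is the only subtle point in the distributional claim.) I expect step (a)–(b), i.e.\ pinning down the exact finite-state invariant and verifying it survives the zero-steps of $S$, to be the main obstacle; everything else is routine once the right invariant is guessed. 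The reference \cite{MR2168855} carries out exactly this computation, so I would follow their case analysis.
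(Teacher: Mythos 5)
Your overall strategy --- encode $\overline S$ as a sign--flip of the increments of $S$ with a flip factor measurable in the past, then run an induction on the defect $D_n:=\overline Y_n-|S_n|$ --- is viable, but the proposal as written has concrete gaps. First, the sign is wrong: with $\overline\varepsilon_{n+1}=\mathrm{sgn}(S_n)\,\varepsilon_{n+1}$ away from $0$, the increments of $\overline S$ \emph{match} those of $|S|$, so along an upward excursion of $S$ from $0$ the walk $\overline S$ eventually keeps setting new maxima, $\overline Y_n$ is eventually $0$ while $|S_n|$ grows without bound, and the defect is unbounded. The functional $\max_{k\le n}\overline S_k-\overline S_n$ tracks $|S_n|$ only if you take $\overline\varepsilon_{n+1}=-\mathrm{sgn}(S_n)\,\varepsilon_{n+1}$ (equivalently, keep your sign but compare $|S_n|$ with $\overline S_n-\min_{k\le n}\overline S_k$, or replace $\overline S$ by $-\overline S$); indeed your own step (b), ``$\overline Y_{n+1}-\overline Y_n=|S_{n+1}|-|S_n|$ except at new maxima'', already presupposes the corrected sign. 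Second, the step at $S_n=0$, which you yourself identify as the crux, is not supplied: ``the one way that keeps the invariant intact'' is never exhibited, and your suggestion to let the flip depend on $\varepsilon_{n+1}$ (``the side $S$ is going to'') would destroy the only argument you give for $\overline S$ being a SRW, since that argument requires the flip factor to be $\sigma(\varepsilon_1,\dots,\varepsilon_n)$-measurable. Also, what you wrote is not the Csaki-Vincze prescription: their increments are $\overline X_j=(-1)^{l+1}X_1X_{j+1}$ on the intervals $[\tau_l+1,\tau_{l+1}]$, i.e.\ a one-step look-ahead with the sign frozen between crossing times of $S$, and the map is two-to-one, not bijective. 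Ending with ``I would follow their case analysis'' leaves the invariant unverified, so as it stands this is a plan rather than a proof.

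The good news is that your plan does close, and more simply than you expect. Take $\overline\varepsilon_{n+1}=-\mathrm{sgn}(S_n)\,\varepsilon_{n+1}$ with any fixed convention at $0$ (e.g.\ $\mathrm{sgn}(0):=-1$ as in the paper); the flip factor is then past-measurable everywhere, so your conditional-symmetry argument gives that $\overline S$ is a SRW. An easy induction shows $D_n\in\{-1,0\}$ for all $n$: if $S_n=0$ the hypothesis forces $\overline Y_n=0$ and then $D_{n+1}\in\{-1,0\}$ whichever value $\varepsilon_{n+1}$ takes; if $S_n\neq0$ and $\overline Y_n\ge1$ then $D_{n+1}=D_n$; if $S_n\neq0$ and $\overline Y_n=0$ then $D_n=-1$, $|S_n|=1$, and $D_{n+1}\in\{-1,0\}$. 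So this adapted coding even yields the bound with constant $1$ instead of $2$. Note, however, that this is genuinely different from the paper's route, which reproduces the Csaki-Vincze transformation and defers its verification to the cited reference: the anticipating structure there is not an accident but exactly what produces the properties used afterwards, namely $\tau_l=\min\{n:\overline S_n=2l\}$, the independence of $S_1$ from $\sigma(\overline S)$, and $T^{-1}\{\overline S\}=\{S,-S\}$ (Corollary \ref{M}), which mirror the weak-solution structure of Tanaka's equation. Your adapted construction makes $S$ a measurable function of $\overline S$, so while it proves Theorem \ref{khali} as stated, it could not be substituted for the Csaki-Vincze map in the rest of the paper.
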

\noindent\textbf {Sketch of the proof.} Here, we just give the expression of $\overline {S}$ with some useful comments (see also the figures below). We \textbf{insist} that a careful reading of the pages 109 and 110 \cite{MR2168855} is recommended for the sequel. Let $X_i=S_i-S_{i-1}, i\geq1$ and define
$$\tau_{1}=\min{\{i> 0 : S_{i-1}S_{i+1}< 0\}}
,\ \tau_{l+1}=\min{\{i>\tau_{l} : S_{i-1}S_{i+1}< 0\}}\ \forall l\geq 1.$$
For $j\geq 1$, set
$$\overline X_j=\sum_{l\geq0}(-1)^{l+1}X_1X_{j+1}1_{\{\tau_{l}+1\leq j\leq\tau_{l+1}\}}.$$
Let $\overline S_0=0,\ \overline S_j=\overline X_1+\cdots+\overline X_j,\ j\geq 1$. Then, the theorem holds for $\overline S$. We call $T(S)=\overline S$ the Csaki-Vincze transformation of $S$.\\

\begin{figure}[h]
\begin{center}
\resizebox{10cm}{7cm}{\input{fig_tanaka2.pstex_t}}
\caption{$S$ and $\overline{S}$.}
\end{center}
\end{figure}

\begin{figure}[h]
\begin{center}
\resizebox{11cm}{7cm}{\input{fig_tanaka3.pstex_t}}
\caption{$|S|$ and $\overline{Y}$.}
\end{center}
\end{figure}
\noindent Note that $T$ is an even function, that is $T(S)=T(-S)$. As a consequence of $(iii)$ and $(iv)$ \cite{MR2168855} (page 110), we have
\begin{equation}\label{csak}
\tau_{l}=\min{\{n\geq 0 , \overline S_n=2l\}}\ \forall l\geq 1. 
\end{equation}
This entails the following
\begin{corl}\label{M}
(1) Let $S$ be a SRW and define $\overline S=T(S)$. Then 
\begin{enumerate}
 \item [(i)] For all $n\geq0$, we have $\sigma(\overline{S}_j, j\leq n)\vee \sigma(S_1)=\sigma(S_j, j\leq n+1)$.
 \item [(ii)] $S_1$ is independent of $\sigma(\overline{S})$.
\end{enumerate}
(2) Let $\overline S =(\overline S_k)_{k\geq 0}$ be a SRW. Then
\begin{enumerate}
 \item [(i)] There exists a SRW $S$ such that: $$\overline Y_n:=\displaystyle{\max_{k\leq n}} \overline S_k - \overline S_n\Rightarrow
|\overline Y_n -|S_n|| \leq 2 \ \ \forall n\in {\N}.$$
\item [(ii)] $T^{-1}\{\overline S\}$ is reduced to exactly two elements $S$ and $-S$ where $S$ is obtained by adding information to $\overline S$.
\end{enumerate}
\end{corl}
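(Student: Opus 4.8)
The plan is to base everything on a reconstruction formula for the Csaki--Vincze transform: once the single extra bit $S_1$ is supplied, $T$ becomes invertible. I would derive this in part (1)(i), deduce (1)(ii) from the evenness $T(S)=T(-S)$, and then obtain all of part (2) by transporting these two facts along the (essentially bijective) map $S\mapsto(T(S),S_1)$.

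For (1)(i), the inclusion $\subseteq$ I would read off directly from the defining formula for $\overline X_j$: with the convention $\tau_0=0$, the sign pattern of the products $S_{i-1}S_{i+1}$, $i\le j$, determines which block $\{\tau_l+1,\dots,\tau_{l+1}\}$ contains $j$, so $\overline X_j$, and hence $\overline S_j$, is a function of $S_0,\dots,S_{j+1}$; together with $\sigma(S_1)$ this yields $\subseteq$. For $\supseteq$ the essential input is (\ref{csak}): since $\tau_l=\min\{n:\overline S_n=2l\}$, the block index $l(j):=\#\{l\ge1:\tau_l<j\}=\lfloor(\max_{i\le j-1}\overline S_i)/2\rfloor$ is a function of $\overline S_0,\dots,\overline S_{j-1}$ alone. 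Inverting the defining formula then gives $X_{j+1}=(-1)^{l(j)+1}X_1\,\overline X_j$, so $S_2,\dots,S_{n+1}$ are recovered from $S_1$ and $\overline S_1,\dots,\overline S_n$, which is $\supseteq$. This also produces a deterministic measurable map $\Psi$ with $S=\Psi((\overline S_n)_n,S_1)$ and, because (\ref{csak}) is an identity about the transform, $\Psi(T(s),s_1)=s$ for every path $s$. For (1)(ii): $-S$ is again a SRW and $T(-S)=T(S)=:\overline S$, so $(\overline S,S_1)\overset{d}{=}(\overline S,-S_1)$; hence $E[f(\overline S)S_1]=-E[f(\overline S)S_1]=0$ for every bounded measurable $f$, which, since $S_1\in\{-1,1\}$, is exactly the independence of $\sigma(\overline S)$ and $S_1$.

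For (2), I would enlarge the space by a variable $\varepsilon$ uniform on $\{-1,1\}$ and independent of $\overline S$. By Theorem \ref{khali} and (1)(ii), for a SRW $S'$ the pair $(T(S'),S'_1)$ has law $\mu\otimes\mathrm{Unif}\{-1,1\}$, where $\mu$ is the SRW law, and this is also the law of $(\overline S,\varepsilon)$. Setting $S:=\Psi(\overline S,\varepsilon)$, one gets $\mathrm{Law}(S)=\Psi_*(\mu\otimes\mathrm{Unif})=\mathrm{Law}(\Psi(T(S'),S'_1))=\mathrm{Law}(S')=\mu$, so $S$ is a SRW; and since $(s,u)\mapsto T(\Psi(s,u))$ equals the first coordinate on a set of full $\mu\otimes\mathrm{Unif}$-measure, $T(S)=\overline S$ a.s., whence $|\overline Y_n-|S_n||\le 2$ by Theorem \ref{khali} applied to $S$. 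This gives (2)(i). For (2)(ii), $T(S)=T(-S)=\overline S$ with $S\neq -S$ gives at least two preimages, while any walk $S''$ with $T(S'')=\overline S$ satisfies $S''=\Psi(\overline S,S''_1)\in\{\Psi(\overline S,1),\Psi(\overline S,-1)\}=\{S,-S\}$; thus $T^{-1}\{\overline S\}=\{S,-S\}$, and $\sigma(S)=\sigma(\overline S)\vee\sigma(\varepsilon)$ (from (1)(i) with $n\to\infty$) is the assertion that $S$ is obtained from $\overline S$ by adjoining one independent fair bit.

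The step I expect to be delicate is the inversion in (1)(i): one must be sure the block index $l(j)$ is genuinely a function of $\overline S$ alone — which is precisely what (\ref{csak}) delivers — and dispose of the degenerate indices in the definition of the $\tau_l$ (the convention $\tau_0=0$ and the harmless possibility $S_{i-1}=0$). Once $\Psi$ is available, all of part (2) is measure-theoretic bookkeeping, amounting to the fact that $S\mapsto(T(S),S_1)$ is, up to null sets, an isomorphism between the law of a SRW and that law tensored with a fair coin.
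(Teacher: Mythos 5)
Your proof is correct, and its core — the inversion formula $X_{j+1}=(-1)^{l+1}X_1\overline X_j$ on the block $\{\tau_l+1\le j\le\tau_{l+1}\}$, with (\ref{csak}) guaranteeing that the block index can be read off $\overline S_0,\dots,\overline S_{j-1}$ alone — is exactly the paper's argument for (1)(i) and the basis of its proof of (2). The two places where you deviate are local. For (1)(ii) the paper rewrites the $\tau_l$ in terms of the products $X_1S_{i-1}\cdot X_1S_{i+1}$ to conclude that $\overline S$ is $\sigma(X_1X_{j+1},\,j\ge0)$-measurable, hence independent of $X_1$; your symmetry argument ($T(S)=T(-S)$ and $S\overset{d}{=}-S$ force $(\overline S,S_1)\overset{d}{=}(\overline S,-S_1)$) reaches the same conclusion without that measurability claim, at the small price of the extraction step from $E[f(\overline S)S_1]=0$, which you handle correctly. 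For (2)(i) the paper defines $S=\Psi(\overline S,\varepsilon)$ by the same explicit formula you use and then verifies \emph{pathwise} that the stopping times $\tau_i(S)$ computed from $S$ coincide with $\tau_i=\min\{n:\overline S_n=2l\}$, so that $T(S)=\overline S$ identically (leaving the SRW property of $S$ as an easy check); your push-forward argument through the a.e. bijection $s\mapsto(T(s),s_1)$ instead delivers the SRW property of $S$ and $T(S)=\overline S$ a.s. in one stroke, but only up to null sets and at the cost of invoking an auxiliary SRW $S'$ together with Theorem \ref{khali} and (1)(ii). Both verifications are sound; the paper's is more hands-on, yours is cleaner on the distributional bookkeeping. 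Your treatment of (2)(ii) (two preimages $\pm\Psi(\overline S,\pm1)$, and $\sigma(S)=\sigma(\overline S)\vee\sigma(S_1)$ with $S_1$ an independent fair bit) matches the paper's, just spelled out more explicitly.
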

\begin{proof} (1) We retain the notations just before the corollary. (i) To prove the inclusion $\subset$, we only need to check that $\{\tau_{l}+1\leq j\leq\tau_{l+1}\}\in\sigma(S_h, h\leq n+1)$ for a fixed $j\leq n$. This is clear since $\{\tau_{l}=m\}\in\sigma(S_h, h\leq m+1)$ for all $l, m\in\N$. For all $1\leq j\leq n$, we have $X_{j+1}=\sum_{l\geq0}(-1)^{l+1}X_1\overline X_j1_{\{\tau_{l}+1\leq j\leq\tau_{l+1}\}}$. By (\ref{csak}), $\{\tau_{l}+1\leq j\leq\tau_{l+1}\}\in\sigma(\overline S_h, h\leq j-1)$ and so the inclusion $\supset$ holds. (ii) We may write
$$\tau_{1}=\min{\{i> 1 : X_1S_{i-1}X_1S_{i+1}< 0\}}
,\ \tau_{l+1}=\min{\{i>\tau_{l} : X_1S_{i-1}X_1S_{i+1}< 0\}}\ \forall l\geq 1.$$
This shows that $\overline S$ is $\sigma(X_1X_{j+1}, j\geq0)$-measurable and (ii) is proved.\\
(2) (i) Set $\overline{X}_j=\overline S_j-\overline S_{j-1}, j\geq1$ and $\tau_{l}=\min{\{n\geq 0 , \overline S_n=2l\}}$ for all $l\geq 1$. Let $\varepsilon$ be a random variable independent of $\overline S$ such that: $$\mathbb P(\varepsilon=1)=\mathbb P(\varepsilon=-1)=\frac{1}{2}.$$
Define
$$X_{j+1}=\varepsilon1_{\{j=0\}}+\left(\sum_{l\geq0}(-1)^{l+1}\varepsilon \overline X_{j}1_{\{\tau_{l}+1\leq j\leq\tau_{l+1}\}}\right)1_{\{j\geq 1\}}.$$
Then set $S_0=0, S_{j}=X_{1}+\cdots X_{j}, j\geq 1$. It is not hard to see that the sequence of the random times $\tau_i(S), i\geq1$ defined from $S$ as in Theorem \ref{khali} is exactly $\tau_i, i\geq1$ and therefore $T(S)=\overline S$. (ii) Let $S$ such that $T(S)=\overline S$. By (1), $\sigma(\overline{S})\vee \sigma(S_1)=\sigma(S)$ and $S_1$ is independent of $\overline S$ which proves (ii).
\end{proof}
\subsection{The link with Tanaka's equation.}
Let $S$ be a SRW, $\overline S=-T(S)$ and $t\longmapsto S(t)$ (resp. $\overline S(t)$) be the linear interpolation of $S$ (resp. $\overline S$) on $\R$. Define for all $n\geq 1$, $S_{t}^{(n)}=\frac{1}{\sqrt n}S(nt), \overline{S}_{t}^{(n)}=\frac{1}{\sqrt n}\overline{S}(nt)$. Then, it can be easily checked (see Proposition 2.4 in \cite{MR838085} page 107) that
$$(\overline{S}_{t}^{(n)},S_{t}^{(n)})_{t\geq 0}\xrightarrow[\text{$n\rightarrow +\infty $}]{\text{law}}(B_{t},W_{t})_{t\geq 0}\ \ \textrm{in}\ C([0,\infty[,\R^2).$$
In particular $B$ and $W$ are two standard Brownian motions. On the other hand, $|Y_n^{+} -|S_n|| \leq 2\  \forall n\in {\N}\ \textrm{with} \ \ Y_n^{+}:=\overline S_n-\displaystyle{\min_{k\leq n}} \overline S_k $ by Theorem \ref{khali} which implies $|W_t|=B_t-\displaystyle{\min_{0\leq u \leq t}} B_u$. Tanaka's formula for local time gives
$$|W_t|=\int_{0}^{t}sgn(W_u)dW_u+L_t(W)=B_t-\displaystyle{\min_{0\leq u \leq t}} B_u,$$
where $L_t(W)$ is the local time at $0$ of $W$ and so 
\begin{equation}\label{houssem}
 dW_u=sgn(W_u)dB_u.
\end{equation}

We deduce that for each SRW $S$ the couple $(-T(S),S)$, suitably normalized and time scaled converges in law towards $(B,W)$ satisfying (\ref{houssem}). Finally, remark that $-T(S)=\overline S \Rightarrow -T(-S)=\overline S $ is the analogue of $W \textrm{solves}\  (\ref{houssem}) \Rightarrow -W \textrm{solves}\  (\ref{houssem})$.
We have seen how to construct solutions to (\ref{houssem}) by means of $T$. In the sequel, we will use this approach to construct a stochastic flow of mappings which solves equation $(T)$ in general.
\subsection{Extensions.}
Let $S = (S_n)_{n\geq 0}$ be a SRW and set $Y_n:=\displaystyle{\max_{k\leq n}} S_k-S_n$. 
For $0\leq p<q$, we say that $E=[p,q]$ is an excursion for $Y$ if the following conditions are satisfied (with the convention $Y_{-1}=0$):\\
$\bullet$\ $Y_p=Y_{p-1}=Y_q=Y_{q+1}=0$. \ \ \\
$\bullet$\ $\forall\ p\leq j<q, Y_j=0\Rightarrow Y_{j+1}=1$.\\
For example in Figure 3, $[2,14], [16,18]$ are excursions for $\overline{Y}$. If $E=[p,q]$ is an excursion for $Y$, define $e(E):=p,\ f(E):=q$. \\
Let $(E_{i})_{i\geq 1}$ be the random set of all excursions of $Y$ ordered such that: $e(E_{i})<e(E_{j}) \ \forall i< j$. From now on, we call $E_{i}$ the $i$th excursion of $Y$. Then, we have
\begin{prop}\label{hajri11}On a probability space $(\Omega ,\mathcal{A},P )$, consider the following jointly independent processes:\\
$\bullet$\ $\vec{\eta}=(\vec{\eta}_i)_{i\geq 1}$, a sequence of i.i.d random variables distributed according to $\displaystyle{\sum_ {i=1}^{N}\alpha_i}\delta_{\vec{e}_i}$.\\ 
$\bullet$ \ $(\overline S_n)_{n\in \N} $ a SRW.\\
Then, on an extension of $(\Omega ,\mathcal{A},P )$, there exists a Markov chain $(M_n)_{n\in \N}$ started at $0$ with stochastic matrix given by (\ref{sirine}) such that: $$\overline Y_n:=\displaystyle{\max_{k\leq n}} \overline S_k-\overline S_n\Rightarrow|M_n-\vec{\eta}_i \overline Y_n|\leq 2$$ on the $i$th excursion of $\overline Y$.
\end {prop}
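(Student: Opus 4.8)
The idea is to build $M$ by "flipping" the successive excursions of $\overline Y$ onto the rays of $G$, using one new $\vec\eta_i$ per excursion and keeping the inter-excursion pieces of $\overline Y$ untouched (these are the stretches where $\overline Y\equiv 0$, i.e. where $\overline S$ is at a running maximum). Concretely, I would first record the structure of $\overline Y=\max_{k\le n}\overline S_k-\overline S_n$: it is a reflected simple random walk, its zero set is exactly $\{n:\overline S_n=\max_{k\le n}\overline S_k\}$, and between two consecutive excursions $\overline Y$ simply stays at $0$ while $\overline S$ takes an up-step followed (at the start of the next excursion) by behaviour that pushes $\overline Y$ to $1$. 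The excursions $(E_i)_{i\ge 1}$ of $\overline Y$ defined just before the proposition are i.i.d. in shape, independent of the sequence $\vec\eta$, and — crucially — the ray-label assigned to excursion $E_i$ should be $\vec\eta_i$, chosen independently of everything that came before, which is what makes the Walsh structure appear.

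**Construction of $M$.** On the (possibly enlarged) space carrying $\overline S$ and $\vec\eta$, define $M_n$ as follows: if $n$ lies in the $i$th excursion $E_i=[p,q]$ of $\overline Y$, set $M_n=\vec\eta_i\,\overline Y_n$ (a point on ray $D_{j}$ at distance $\overline Y_n$ from $0$, where $\vec\eta_i=\vec e_j$); if $\overline Y_n=0$, set $M_n=0$. Then automatically $|M_n-\vec\eta_i\overline Y_n|=0\le 2$ on $E_i$ (in fact the proposition's bound is attained with equality $0$, but I will state it as $\le 2$ to match the format and to leave room in case one prefers to reuse the Csaki–Vincze chain directly rather than the exact reflected walk — see the remark at the end of this plan). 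It remains to check that $(M_n)_{n\in\N}$ is a Markov chain started at $0$ with transition matrix $Q$ from \eqref{sirine}. For this I would verify the Markov property by a direct computation of one-step conditional laws, splitting on the current state:

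\begin{itemize}
\item[(a)] If $M_n=m\vec e_j$ with $m\ge 1$: then $n$ is strictly inside an excursion $E_i$, the label $\vec\eta_i$ is already "frozen", and $M_{n+1}-M_n=\pm\vec e_j$ according to whether $\overline Y_{n+1}-\overline Y_n=\pm 1$; since $\overline S$ is a SRW and $\{n+1\le f(E_i)\}\cup\{\overline Y_{n+1}>0\}$-type events are determined so that each sign has probability $\tfrac12$, we get $Q(m\vec e_j,(m\pm1)\vec e_j)=\tfrac12$. (One must be slightly careful at the left endpoint $n=e(E_i)$, where $\overline Y_n=0$ but the excursion has just started; there $M_n=0$ and case (b) applies.)
\item[(b)] If $M_n=0$: either $n+1$ starts a new excursion $E_{i+1}$ — which happens with probability $\tfrac12$, corresponding to $\overline S$ \emph{not} setting a new running maximum at time $n+1$, i.e. $\overline S_{n+1}=\overline S_n-1$ — in which case $\overline Y_{n+1}=1$ and $M_{n+1}=\vec\eta_{i+1}\cdot 1=\vec e_j$ with probability $\alpha_j$, independently of the past because $\vec\eta_{i+1}$ is fresh; or $\overline Y_{n+1}=0$ (probability $\tfrac12$), and then $M_{n+1}=0$. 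Combining, $Q(0,\vec e_j)=\tfrac12\cdot\alpha_j$…
\end{itemize}

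Here I hit the one genuine subtlety, and it is the step I expect to be the main obstacle: the transition matrix in \eqref{sirine} has $Q(0,\vec e_i)=\alpha_i$ (summing to $1$), i.e. from $0$ the chain \emph{always} moves out to distance $1$, whereas the naive reflected-walk construction gives a holding probability $\tfrac12$ at $0$. The resolution — and the reason the statement allows the slack "$\le 2$" and speaks of an \emph{extension} — is that one should not use $\overline Y$ verbatim but rather the Csaki–Vincze output adapted to this setting: recall from Section 2 that for a SRW $S$, $\overline S=-T(S)$ satisfies $|\overline Y_n-|S_n||\le 2$, and $|S|$ is the reflected walk that genuinely leaves $0$ at every visit. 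So the correct recipe is: start from $\overline S$, invoke Corollary \ref{M}(2) to get a SRW $S$ with $\overline S=-T(S)$ (adding one bit of information, hence the extension), consider $|S_n|$ which is a reflected SRW with \emph{no holding at $0$}, flip its $i$th excursion onto ray $D_i$ via $\vec\eta_i$ to define $M_n$, and then check $|M_n-\vec\eta_i\overline Y_n|\le|M_n-\vec\eta_i|S_n||+|\,|S_n|-\overline Y_n|\le 0+2=2$ on the $i$th excursion (after matching up the excursion indices of $|S|$ and of $\overline Y$, which agree up to the same $O(1)$ discrepancy). The Markov-chain verification above then goes through cleanly with $|S|$ in place of $\overline Y$, giving exactly $Q(0,\vec e_i)=\alpha_i$ and $Q(n\vec e_i,(n\pm1)\vec e_i)=\tfrac12$, because $|S|$ is a reflected SRW.

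**Summary of steps.** (1) From $\overline S$ obtain $S$ with $\overline S=-T(S)$ on an extension (Corollary \ref{M}(2)), independent of $\vec\eta$. (2) Let $R_n=|S_n|$, a reflected SRW; identify its excursions and match their ordering with that of the excursions of $\overline Y$, noting $|R_n-\overline Y_n|\le 2$ for all $n$. (3) Define $M_n=\vec\eta_i\,R_n$ on the $i$th excursion of $R$ and $M_n=0$ when $R_n=0$. (4) Check $M_0=0$ and, by the one-step case analysis (a)–(b) above with $R$ in place of $\overline Y$ and using independence/freshness of each $\vec\eta_i$ together with the reflected-walk transitions of $R$, that $(M_n)$ is Markov with matrix $Q$. (5) Conclude the bound $|M_n-\vec\eta_i\overline Y_n|\le|M_n-\vec\eta_i R_n|+|R_n-\overline Y_n|=0+2\le 2$ on the $i$th excursion. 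The bookkeeping in step (2) — aligning excursion indices of $R$ and of $\overline Y$ despite the $\pm2$ gap, and handling the finitely many boundary times where one of the two processes is at $0$ and the other is not — is the part requiring care; everything else is a routine transcription of the $N=2$ skew-walk picture to $N$ rays.
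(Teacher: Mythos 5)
Your overall strategy is the same as the paper's: invert the Csaki--Vincze transformation (Corollary \ref{M}(2)) to get a SRW $S$ with $T(S)=\overline S$ on an extension, work with the reflected walk $|S|$ (which leaves $0$ at every visit, fixing the holding-probability problem you correctly identified), flip its excursions onto the rays, and get the bound from $\bigl||S_n|-\overline Y_n\bigr|\le 2$. The genuine gap is in your step (2)--(3), which you flag as ``bookkeeping'' but which is in fact where the real work lies, and your proposed resolution is wrong as stated. The excursions of $|S|$ and of $\overline Y$ are \emph{not} in bijection, and the index discrepancy is not $O(1)$: on each interval $[\tau_l,\tau_{l+1}]$ between the Csaki--Vincze times, $|S|$ may have an excursion while $\overline Y$ has none (the paper's case (i), where $\tau_{l+1}=\tau_l+2$ and $|S|$ makes a height-one excursion), and in case (ii2) $|S|$ has two excursions against a single excursion of $\overline Y$. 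These events recur infinitely often, so if you label the $i$th excursion of $R=|S|$ by $\vec\eta_i$, the label used on the $i$th excursion of $\overline Y$ is some $\vec\eta_j$ with $j-i\to\infty$; whenever $\vec\eta_j\ne\vec\eta_i$ and $\overline Y_n\ge 3$ the distance $|M_n-\vec\eta_i\overline Y_n|=|M_n|+\overline Y_n$ exceeds $2$, so the claimed estimate fails almost surely for large $i$.

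The correct alignment is the content of the paper's case analysis on $[\tau_l,\tau_{l+1}]$: each excursion of $\overline Y$ is matched to the excursion of $|S|$ overlapping it (this uses the facts (a), (b) locating the first zero of $S$ after $\tau_l$), that excursion receives the label $\vec\eta_{N(E)}$ of the corresponding $\overline Y$-excursion, and the unmatched excursions of $|S|$ (case (i), and the initial two-step excursion in case (ii2)) must receive \emph{fresh} labels $\vec\beta_l$, i.i.d.\ with law $\sum_i\alpha_i\delta_{\vec e_i}$ and independent of $(\overline S,\vec\eta)$ --- a further enlargement of the space that your plan does not contain and cannot avoid, since every $\vec\eta_i$ is already pinned to its own $\overline Y$-excursion and reusing a neighbouring label would destroy the transition law at $0$. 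Finally, verifying $\mathbb P(M_{n+1}=\vec e_i\mid M_n=0)=\alpha_i$ is then not the routine one-step computation of your step (4): the labels are consumed in an $S$-dependent order mixing the two sequences, and the paper justifies the law by comparing $M$ with an auxiliary process $V$ built from independent labels attached to the successive zero-free stretches of $S$ and showing $M\overset{law}{=}V$. So the skeleton of your proof matches the paper, but the index-matching, the auxiliary labels, and the law identification are missing steps, not mere bookkeeping.
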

\noindent\begin{proof}
Fix $S\in T^{-1}\{\overline S\}$. Then, by Corollary \ref{M}, we have $|\overline Y_n -|S_n|| \leq 2 \ \ \forall n\in {\N}$. Consider a sequence $(\vec{\beta}_i)_{i\geq 1}$  of i.i.d random variables distributed according to $\displaystyle{\sum_ {i=1}^{N}\alpha_i}\delta_{\vec{e}_i}$ which is independent of $(\overline S,\vec\eta)$. Denote by $(\tau_{l})_{l\geq 1}$ the sequence of random times constructed in the proof of Theorem \ref{khali} from $S$. It is sufficient to look to what happens at each interval $[\tau_{l},\tau_{l+1}]$ (with the convention $\tau_{0}=0$). \\
Using (\ref{csak}), we see that in $[\tau_{l},\tau_{l+1}]$ there are two jumps of $\displaystyle{\max_{k\leq n}} \overline S_k$; from $2l$ to $2l+1$ ($J_1$) and from $2l+1$ to $2l+2$ ($J_2$). The last jump ($J_2$) occurs always at $\tau_{l+1}$ by (\ref{csak}). Consequently there are only 3 possible cases:\\
 (i) There is no excursion of $\overline Y$ ($J_1$ and $J_2$ occur respectively at $\tau_l+1$ and $\tau_l+2$, see $[0,\tau_1]$ in Figure 3).\\
 (ii) There is just one excursion of $\overline Y$ (see $[\tau_1,\tau_2]$ in Figure 3).\\
 (iii) There are 2 excursions of $\overline Y$ (see $[\tau_2,\tau_3]$ in Figure 3).\\
 Note that: $\overline Y_{\tau_{l}}=\overline Y_{\tau_{l+1}}=S_{\tau_{l}}=S_{\tau_{l+1}}=0$.
In the case (i), we have necessarily $\tau_{l+1}=\tau_{l}+2$. Set $M_n=\vec{\beta}_{l}.|S_n|$ \ \ $\forall n\in[\tau_{l},\tau_{l+1}]$.\\
To treat other cases, the following remarks may be useful: from the expression of $\overline S$, we have $\forall l\geq 0$
\begin{itemize}
\item[(a)]If $k\in [\tau_{l}+2,\tau_{l+1}]$, $\overline S_{k-1}=2l+1\Longleftrightarrow S_{k}=0$.
\item[(b)]If $k\in[\tau_{l},\tau_{l+1}]$, $\overline Y_{k}=0\Rightarrow |S_{k+1}|\in \{0,1\}\ \textrm{and}\ S_{k+1}=0\Rightarrow \overline Y_{k}=0$.
\end{itemize}
In the case (ii), let $E_l^{1}$ be the unique excursion of $\overline Y$ in the interval $[\tau_{l},\tau_{l+1}]$. Then, we have two subcases:\\
(ii1) $f(E_l^{1})=\tau_{l+1}-2$ ($J_1$ occurs at $\tau_{l+1}-1$).\\
If $\tau_l +2 \leq k\leqslant f(E_l^{1})+1$, then $k-1\leqslant f(E_l^{1})$, and so $\overline S_{k-1}\neq 2l+1$. Using (a), we get: $S_k\neq 0$. Thus, in this case the first zero of $S$ after $\tau_{l}$ is $\tau_{l+1}$. Set: $M_n=\vec{\eta}_{N(E_l^{1})}.|S_n|$, where $N(E)$ is the number of the excursion $E$.\\
(ii2)$f(E_l^{1})=\tau_{l+1}-1$ ($J_1$ occurs at $\tau_l+1$ and so $\overline Y_{\tau_l+1}=0)$). In this case, using (b) and the figure below we see that the first zero $\tau_{l}^{*}$ of $S$ after $\tau_{l}$ is $e(E_l^{1})+1=\tau_l+2$.

\begin{figure}[h]
\begin{center}
\resizebox{6cm}{2cm}{\input{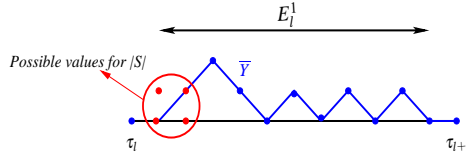}}
\caption{The case (ii2).}
\end{center}
\end{figure}

Set $$\begin{array}{ll}
M_n=\begin{cases}
\ \vec{\beta}_l.|S_n|&\text{if }\ n\in [\tau_{l},\tau_{l}^{*}-1]\\
\ \vec{\eta}_{N(E_l^1)}.|S_n|&\text {if }\ n\in[\tau_{l}^{*},\tau_{l+1}]\\
\end{cases}
\end{array}$$ \\
In the case (iii), let $E_l^{1}$ and $E_l^{2}$ denote respectively the first and $2$nd excursion of $\overline Y$ in $[\tau_{l},\tau_{l+1}]$. We have,  $\tau_{l}+2 \leq k\leq e(E_l^{2})\Rightarrow k-1\leq e(E_l^{2})-1=f(E_l^{1})\Rightarrow \overline S_{k-1}\neq 2l+1\Rightarrow S_k\neq 0 $ by (a). Hence, the first zero of $S$ after  $\tau_{l}$ is $\tau_{l}^{*}:=e(E_l^{2})+1$ using $\overline Y_{k}=0\Rightarrow |S_{k+1}|\in \{0,1\}$ in (b). Set:
$$\begin{array}{ll}
M_n=\begin{cases}
\ \vec{\eta}_{N(E_l^{1})}.|S_n|&\text{if}\ n\in[\tau_{l},\tau_{l}^{*}-1]\\
\ \vec{\eta}_{N(E_l^{2}}).|S_n|&\text {if}\ n\in[\tau_{l}^{*},\tau_{l+1}]\\
\end{cases}
\end{array}$$
Let ($M_n)_{n\in \N}$ be the process constructed above. Then clearly $|M_n-\vec{\eta}_i \overline Y_n|\leq 2$ on the $i$th excursion of $\overline Y$.\\
To complete the proof, it suffices to show that the law of $(M_n)_{n\in {\N}}$ is given by (\ref{sirine}). The only point to verify is $\mathbb P(M_{n+1}=\vec{e}_i|M_n=0)=\alpha_i$. For this, consider on another probability space the jointly independent processes $(S,\vec{\gamma},\vec{\lambda})$ such that $S$ is a SRW and $\vec{\gamma},\vec{\lambda}$ have the same law as $\vec\eta$. Let $(\tau_{l})_{l\geq 1}$ be the sequence of random times defined from $S$ as in Theorem \ref{khali}. For all $l\in {\N}$, denote by $\tau_{l}^{*}$ the first zero of $S$ after $\tau_{l}$ and set
\\$$\begin{array}{ll}
V_n=\begin{cases}
\ \vec{\gamma}_l.|S_n|&\text{if }\ n\in [\tau_{l},\tau_{l}^{*}-1]\\
\ \vec{\lambda}_{l}.|S_n|&\text {if }\ n\in[\tau_{l}^{*},\tau_{l+1}]\\
\end{cases}
\end{array}$$
It is clear, by construction, that $M\overset{law}{=}V$. We can write:
$$\{\tau_{0},\tau_{0}^{*},\tau_{1},\tau_{1}^{*},\tau_{2},\cdots\}=\{T_0,T_1,T_2,\cdots\} \ \ \textrm{with}\  T_0=0<T_1<T_2<\cdots.$$
For all $k\geq 0$, let $\vec\zeta_k:=\displaystyle{\sum_{j=0}^{N}\vec{e}_j {1}_{\{V|_{[T_k,T_{k+1}]}\in D_j\}}}$. Obviously, $S$ and $\vec\zeta_k$ are independent and $\vec\zeta_k\overset{law}{=}\displaystyle{\sum_ {i=1}^{N}\alpha_i}\delta_{\vec{e}_i}$. Furthermore
\begin{eqnarray}
\mathbb P(V_{n+1}=\vec{e}_i|V_n=0)&=&\frac{1}{\mathbb P(S_n=0)} \displaystyle{\sum_{k=0}^{+\infty}\mathbb P(V_{n+1}=\vec{e}_i,S_n=0,n\in [T_k,T_{k+1}[)}\nonumber\\
&=&\frac{1}{\mathbb P(S_n=0)} \displaystyle{\sum_{k=0}^{+\infty}\mathbb P(\vec\zeta_k=\vec{e}_i,S_n=0,n\in [T_k,T_{k+1}[)}\nonumber\\
&=&\alpha_i\nonumber\
\end{eqnarray}
This completes the proof of the proposition.\end{proof}
\begin{rem}
With the notations of Proposition \ref{hajri11}, let $(\vec\eta.\overline Y)$ be the Markov chain defined by $(\vec\eta.\overline Y)_n=\vec\eta_i \overline Y_n$ on the ith excursion of $\overline Y$ and
$(\vec\eta.\overline Y)_n=0$ if $\overline Y_n=0$. Then the stochastic Matrix of $(\vec\eta.\overline Y)$  is given by 
\begin{equation}\label{srrr}
M(0,0)=\frac{1}{2}, M(0,\vec{e}_{i})=\frac{\alpha_i}{2}, M(n\vec{e}_{i},(n+1)\vec{e}_{i})=M(n\vec{e}_{i},(n-1)\vec{e}_{i})=\frac{1}{2}, i\in [1,N],\  n\in {\N^{*}}.
\end{equation}
\end{rem}
\section{Proof of main results.}
\subsection{Proof of Proposition \ref{enri}.}
Let $(Z_t)_{t\geq 0}$ be a $W(\alpha_1,\cdots,\alpha_N)$ on $G$ started at $0$. For all $i\in [1,N]$, define $Z_{t}^i = |Z_t| 1_{\{Z_t\in D_i\}}-|Z_{t}| 1_{\{Z_t\notin D_i\}}$. Then $Z^i_t=\varPhi^i(Z_t)$ where $\varPhi^i(x)=|x|1_{\{x\in D_i\}}-|x|1_{\{x\notin D_i\}}$. Let $Q^i$ be the semigroup of the skew Brownian motion of parameter $\alpha_i$ $(SBM(\alpha_i))$ (see \cite{MR1725357} page 87). Then the following relation is easy to check: $P_{t}(f\circ\varPhi^i)=Q_t^if\circ\varPhi^i$ for all bounded measurable function $f$ defined on $\R$. This shows that $Z^i$ is a $SBM(\alpha_i)$ started at $0$. For $n\geq1$, $i\in[1,N]$, define  $$T^n_0=0,\ \ T^n_{k+1}=\inf\{r\geq0: d(Z_r,Z_{T^n_k})=\frac{1}{\sqrt{n}}\}, k\geq0.$$
$$T^{n,i}_0=0,\ \ T^{n,i}_{k+1}=\inf\{r\geq0: |Z^i_r-Z^i_{T^{n,i}_k}|=\frac{1}{\sqrt{n}}\}, k\geq0.$$
Remark that $T^n_{k+1}=T^{n,i}_{k+1}=\inf\{r\geq0: ||Z_r|-|Z_{T^n_k}||=\frac{1}{\sqrt{n}}\}$. Furthermore if $Z_t\in D_i$, then obviously $d(Z_t,Z_s)=|Z^i_t-Z^i_s|$ for all $s\geq0$ and consequently 
\begin{equation}\label{machin}
 d(Z_t,Z_s)\leq\sum_{i=1}^{N}|Z^i_t-Z^i_s|.
\end{equation}
Now define $Z^n_k=\sqrt{n}Z_{T^n_k}, Z^{n,i}_k=\sqrt{n}Z^i_{T^{n,i}_k}$. Then $(Z^n_k,k\geq0)\overset{law}{=}M$ (see the proof of Proposition 2 in \cite{MR5010101}). For all $T>0$, we have
$$\sup_{t\in[0,T]}d(Z_t,\frac{1}{\sqrt{n}}Z^n_{\lfloor nt\rfloor})\leq \sum_{i=1}^{N}\sup_{t\in[0,T]}|Z^i_t-\frac{1}{\sqrt{n}}Z^{n,i}_{\lfloor nt\rfloor}|\xrightarrow[\text{$n\rightarrow +\infty $}]{}0\ \textrm{in probability}$$
by Lemma 4.4 \cite{MR1975425} which proves our result.
\begin{rems}
(1) By (\ref{machin}), a.s. $t\mapsto Z_t$ is continuous. We will always suppose that Walsh Brownian motion is continuous.\\
(2) By combining the two propositions \ref{enri} and \ref{hajri11}, we deduce that $(\vec\eta.\overline Y)$ rescales as Walsh Brownian motion in the space of continuous functions. It is also possible to prove this result by showing that the family of laws is tight and that any limit process along a subsequence is the Walsh Brownian motion.
\end{rems}

\subsection{Scaling limits of $(\varPsi,K)$.}
Set $\vec{\eta}_{p,n}=\vec{e}(\varPsi_{p,n})$ for all $p\leq n$ where $\varPsi_{p,n}=\varPsi_{p,n}(0)$. 
\begin{prop}\label{thar}
\begin{enumerate}
\item [(i)] For all $p\leq n,$ $|\varPsi_{p,n}|=S^{+}_{p,n}$.
\item[(ii)] For all $p<n<q$, 
$$\mathbb P(\vec{\eta}_{p,q}=\vec{\eta}_{n,q}|\textrm{min}_{h\in[p,q]}S_h=\textrm{min}_{h\in[n,q]}S_h)=1$$
and 
$$\mathbb P(\vec{\eta}_{p,n}=\vec{\eta}_{p,q}|\textrm{min}_{h\in[p,n]}S_h=\textrm{min}_{h\in[p,q]}S_h, S^+_{p,j}>0\ \forall j\in[n,q])=1.$$
\item[(iii)] Set $T_{p,x}=\inf\{q\geq p: S_q-S_p=-|x|\}$. Then for all $p\leq n$, $x\in G_{\N}$,
$$\varPsi_{p,n}(x)=(x+\vec{e}(x)S_{p,n}){1}_{\{n\leq\ T_{p,x}\}}+\varPsi_{p,n}{1}_{\{n>T_{p,x}\}};$$
$$K_{p,n}(x)=E[\delta_{\varPsi_{p,n}(x)}|\sigma(S)]=\delta_{x+\vec{e}(x)S_{p,n}} 1_{\{n\leq T_{p,x}\}}+\sum_ {i=1}^{N}\alpha_i\delta_{S^+_{p,n}\vec{e}_i}1_{\{n>T_{p,x}\}}.$$
\end{enumerate}
\end{prop}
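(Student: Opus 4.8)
Throughout I would write $X_{k}=S_{k}-S_{k-1}$ and use freely the cocycle identity $\varPsi_{p,n}(x)=\varPsi_{m,n}(\varPsi_{p,m}(x))$ for $p\le m\le n$, which is immediate from the definition of $\varPsi_{p,n}$ as an iterated composition; I would also record that, $S$ being a recurrent simple random walk, the times $T_{p,x}$ ($p\in\Z$, $x\in G_{\N}$) are a.s.\ finite, and work on that event. The three items are then proved in order, (i) and (ii) being purely pathwise and (iii) using in addition the independence of $(\vec{\eta}_k)$ from $S$.

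\emph{Item (i).} Put $y_k=|\varPsi_{p,k}(0)|$. From the one-step description, a particle at $a\vec{e}_i$ with $a\ge1$ moves to $(a+X_{k+1})\vec{e}_i$ and a particle at $0$ moves to $\vec{\eta}_k\,(X_{k+1})^+$, so $y_p=0$ and $y_{k+1}=(y_k+X_{k+1})^+$ for all $k\ge p$. An elementary induction shows that the solution of this Lindley-type recursion is $y_n=\max_{p\le j\le n}(S_n-S_j)=S_n-\min_{p\le h\le n}S_h=S^+_{p,n}$.

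\emph{Item (iii).} First, by induction on $n$, one gets $\varPsi_{p,n}(x)=x+\vec{e}(x)S_{p,n}$ for $p\le n\le T_{p,x}$: as long as $n<T_{p,x}$ one has $|x|+S_{p,n}\ge 1$, so the particle from $x$ has not reached $0$ and simply translates along the ray carrying $x$; at $n=T_{p,x}$ this gives $\varPsi_{p,T_{p,x}}(x)=(|x|+S_{p,T_{p,x}})\vec{e}(x)=0$. Since $T_{p,x}$ is a running minimum of $S$ over $[p,T_{p,x}]$, one has $S^+_{p,T_{p,x}}=0$, hence by (i) $\varPsi_{p,T_{p,x}}(0)=0$ as well; the cocycle identity then forces $\varPsi_{p,n}(x)=\varPsi_{p,n}(0)$ for every $n\ge T_{p,x}$, which is the stated formula for $\varPsi$. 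For the kernel, I would note that $K_{p,n}(x)=E[\delta_{\varPsi_{p,n}(x)}\mid\sigma(S)]$ for every $n$: this propagates from the one-step case by induction because, conditionally on $\sigma(S)$, the maps $\varPsi_{i,i+1}$ are independent (each being a function of $(S,\vec{\eta}_i)$ with the $\vec{\eta}_i$ i.i.d.\ and independent of $S$), so the composition of conditional laws is the conditional law of the composition. As $T_{p,x}$ and $S_{p,n}$ are $\sigma(S)$-measurable, conditioning the $\varPsi$-formula on $\sigma(S)$ yields $\delta_{x+\vec{e}(x)S_{p,n}}$ on $\{n\le T_{p,x}\}$ and $K_{p,n}(0)$ on $\{n>T_{p,x}\}$. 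It remains to identify $K_{p,n}(0)$; the case $S^+_{p,n}=0$ is trivial (both sides are $\delta_0$), so assume $S^+_{p,n}>0$ and set $m=\max\{k\in[p,n):S^+_{p,k}=0\}$, which is $\sigma(S)$-measurable. Then the particle from $0$ leaves the origin at time $m+1$ onto the ray $\vec{\eta}_m$ and, because $S^+_{p,k}>0$ for $m<k\le n$, stays on it up to time $n$; hence $\vec{\eta}_{p,n}=\vec{\eta}_m$, and conditioning on $\{m=m_0\}$ and summing over $m_0$ gives $\mathbb P(\vec{\eta}_{p,n}=\vec{e}_i\mid\sigma(S))=\alpha_i$, i.e.\ $K_{p,n}(0)=\sum_{i}\alpha_i\delta_{S^+_{p,n}\vec{e}_i}$.

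\emph{Item (ii) and main obstacle.} Both statements hold pathwise on the conditioning events, so I would argue directly (the conditional probabilities are then automatically $1$). On $\{\min_{[p,q]}S=\min_{[n,q]}S\}$ choose $m\in[n,q]$ attaining the common minimum; then $S^+_{p,m}=S^+_{n,m}=0$, so by (i) $\varPsi_{p,m}(0)=\varPsi_{n,m}(0)=0$, and the cocycle identity gives $\varPsi_{p,q}(0)=\varPsi_{m,q}(0)=\varPsi_{n,q}(0)$, whence $\vec{\eta}_{p,q}=\vec{\eta}_{n,q}$. On $\{S^+_{p,j}>0\ \forall j\in[n,q]\}$ (which contains the event in the statement), (i) gives $\varPsi_{p,j}(0)\neq0$ for $j\in[n,q]$, so the particle from $0$ cannot switch rays between times $n$ and $q$, and $\vec{\eta}_{p,n}=\vec{\eta}_{p,q}$. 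I expect the only genuinely non-pathwise point, and hence the part needing real care, to be the identification of the conditional law of the ray $\vec{\eta}_{p,n}$ given $\sigma(S)$ in (iii): one must correctly locate the last excursion of the reflected walk $S^+_{p,\cdot}$ straddling time $n$, verify the measurability of its index $m$ and that the particle is genuinely carried by the ray $\vec{\eta}_m$ throughout that excursion, and then invoke the independence of $(\vec{\eta}_k)$ from $S$; everything else is bookkeeping with the one-step dynamics.
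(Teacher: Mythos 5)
Your proof is correct and follows essentially the same route as the paper: a one-step induction (your Lindley recursion) for (i), reduction to a common zero of the reflected walk together with the flow property for (ii), and the cocycle identity at $T_{p,x}$ plus the independence of $(\vec{\eta}_k)$ from $S$ for (iii). The only differences are cosmetic — in (ii) you use a minimizing time where the paper uses the last zero $J=\sup\{j<q: S^+_{p,j}=0\}$, and you spell out the conditional-law computation identifying $K_{p,n}(0)$ that the paper leaves as ``easy to establish.''
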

\noindent\begin{proof} (i) We take $p=0$ and prove the result by induction on $n$. For $n=0$, this is clear. Suppose the result holds for $n$. If $\varPsi_{0,n}\in G^{*}$, then $S^+_{0,n}>0$ and so $\textrm{min}_{h\in[0,n]}S_h=\textrm{min}_{h\in[0,n+1]}S_h$. Moreover 
$\varPsi_{0,n+1}=\varPsi_{0,n}+\vec{\eta}_{0,n}S_{n,n+1}=(S_{n+1}-\textrm{min}_{h\in[0,n]}S_h)\vec{\eta}_{0,n}=S^+_{0,n+1}\vec{\eta}_{0,n}$. If $\varPsi_{0,n}=0$, then $S^{+}_{0,n}=0$ and $|\varPsi_{0,n+1}|=S^{+}_{n,n+1}$. But $\textrm{min}_{h\in[0,n+1]}S_h=\textrm{min}(\textrm{min}_{h\in[0,n]}S_h,S_{n+1})=\textrm{min}(S_n,S_{n+1})$ since $S^{+}_{0,n}=0$ which proves (i).\\
(ii) Let $p<n<q$. If $\textrm{min}_{h\in[p,q]}S_h=\textrm{min}_{h\in[n,q]}S_h$, then $S^+_{p,q}=S^+_{n,q}$. When $S^+_{p,q}=0$, we have
$\vec{\eta}_{p,q}=\vec{\eta}_{n,q}=\vec{e}_N$ by convention. Suppose that $S^+_{p,q}>0$, then clearly 
$$J:=\sup\{j<q: S^+_{p,j}=0\}=\sup\{j<q: S^+_{n,j}=0\}.$$
By the flow property of $\varPsi$, we have $\varPsi_{p,q}=\varPsi_{n,q}=\varPsi_{J,q}$. The second assertion of (ii) is also clear.\\
(iii) By (i), we have $\varPsi_{p,n}=\varPsi_{p,n}(x)=0$ if $n=T_{p,x}$ and so $\varPsi_{p,.}(x)$ is given by $\varPsi_{p,.}$ after $T_{p,x}$ using the cocyle property. The last claim is easy to establish.
\end{proof}
For all $s\in\R$, let $d_s$ (resp.  $d_{\infty}$) be the distance of uniform convergence on every compact subset of $C([s, +\infty[,G)$ (resp. $C(\R,\R)$). Denote by $\mathbb D= \{s_n, n\in \N\}$ the set of all dyadic numbers of $\R$ and define $\widetilde{C} = C(\R,\R)\times \displaystyle\prod_{n=0}^{+\infty}C([s_n, +\infty[,G)$ equipped with the metric:
$$d(x,y) =d_{\infty}(x',y')+\sum_{n=0}^{+\infty }\frac{1}{2^n} \inf(1,d_{s_n}(x_n,y_n))\ \ \textrm{where}\ x=(x',x_{s_0},\cdots), y=(y',y_{s_0},\cdots).$$
Let $t\longmapsto S(t)$ be the linear interpolation of $S$ on $\R$ and define $S_{t}^{(n)}=\frac{1}{\sqrt n}S(nt), n\geq 1$.
If $u\leq0$, we define $\lfloor u \rfloor=-\lfloor -u \rfloor$. Then, we have
$$S_{t}^{(n)}= S_{t}^{n}+o(\frac{1}{\sqrt n}),\ \textrm{with}\ \ S_{t}^{n}:=\frac{1}{\sqrt n}S_{\lfloor nt \rfloor}.$$ 
Let $\varPsi_{s,t}^n=\varPsi_{s,t}^n(0)$ (defined in Corollary \ref{bess}). Then $\varPsi_{s,t}^{(n)}:=\frac{1}{\sqrt n}\varPsi_{\lfloor ns \rfloor,\lfloor nt \rfloor}+o(\frac{1}{\sqrt n})$ and we have the following
\begin{lemma}\label{tight}
Let $\mathbb P_n$ be the law of $Z^n =({S_.}^{(n)},(\varPsi_{s_i,.}^{(n)})_{i\in \N})$ in $\widetilde{C}$. Then $(\mathbb P_n, n\geq1)$ is tight.
\end{lemma}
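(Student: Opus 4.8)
The plan is to establish tightness of the family $(\mathbb{P}_n)$ in $\widetilde{C}$ by checking tightness componentwise, which suffices since a finite-or-countable product of tight families is tight (using the particular form of the metric $d$, where the tail of the product is uniformly controlled by the $2^{-n}$ weights). So first I would reduce the problem to showing that for each fixed coordinate — the driving path $S_\cdot^{(n)}$ and each $\varPsi_{s_i,\cdot}^{(n)}$ — the corresponding family of laws is tight in the relevant path space ($C(\mathbb{R},\mathbb{R})$ or $C([s_i,+\infty[,G)$). For the driving noise $S_\cdot^{(n)}$ this is immediate: it is (up to a vanishing $o(1/\sqrt n)$ term) a rescaled simple random walk, so Donsker's theorem gives convergence in law and in particular tightness.

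The heart of the matter is tightness of $\varPsi_{s_i,\cdot}^{(n)}$ in $C([s_i,+\infty[,G)$. Here I would use the explicit description from Proposition \ref{thar}: $|\varPsi_{p,n}| = S^+_{p,n}$, and more generally $\varPsi_{p,n}(x)$ follows the deterministic motion $x + \vec{e}(x) S_{p,n}$ until the hitting time $T_{p,x}$ and afterward coincides with $\varPsi_{p,n}(0)$, whose modulus is the reflected walk $S^+_{p,n}$. The key point is the distance bound on $G$: for any two points $z, z' \in G$ one has $d(z,z') \le |z| + |z'|$ always, and $d(z,z') = \bigl||z|-|z'|\bigr|$ when they lie on the same ray. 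Consequently, for $s_i \le u \le v$, if $\varPsi_{s_i,\cdot}$ stays on a single ray on $[u,v]$ (equivalently $S^+_{s_i,j} > 0$ for all $j \in [\lfloor nu\rfloor, \lfloor nv\rfloor]$) then $d(\varPsi^{(n)}_{s_i,u}, \varPsi^{(n)}_{s_i,v})$ equals the increment of the rescaled reflected walk; otherwise both endpoints are within distance $o(1)$ of the excursion-level, and again $d(\varPsi^{(n)}_{s_i,u},\varPsi^{(n)}_{s_i,v}) \le \frac{1}{\sqrt n}(S^+_{s_i,\lfloor nu\rfloor} + S^+_{s_i,\lfloor nv\rfloor})$, both terms being small when $S^+$ is near its minimum. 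Either way the modulus of continuity of $\varPsi^{(n)}_{s_i,\cdot}$ on $G$ is controlled by the modulus of continuity of the rescaled reflected random walk $\frac{1}{\sqrt n} S^+_{\lfloor ns_i\rfloor, \lfloor n\cdot\rfloor}$, which is tight by Donsker's theorem applied to $S$ together with the continuity of the reflection map $w \mapsto w - \inf w$. I would make this precise with the Arzelà–Ascoli/Billingsley tightness criterion: verify $\sup_n \mathbb{P}_n(|\varPsi^{(n)}_{s_i,s_i}| > a) \to 0$ as $a\to\infty$ (here $\varPsi^{(n)}_{s_i,s_i} = 0$, so this is trivial), and $\lim_{\delta\to 0}\limsup_n \mathbb{P}_n\bigl(\omega_{[s_i,s_i+T]}(\varPsi^{(n)}_{s_i,\cdot},\delta) > \varepsilon\bigr) = 0$ for each $T,\varepsilon$, the latter following from the analogous statement for the reflected walk via the $d$-bound above.

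I expect the main obstacle to be the bookkeeping with the metric $d$ on $G$ near the branch point: the map $z \mapsto |z|$ is $1$-Lipschitz from $G$ to $\mathbb{R}_+$ but is not injective, so one cannot simply transport tightness from the real-valued reflected walk back to $G$ without arguing that oscillation of $\varPsi^{(n)}_{s_i,\cdot}$ across the origin — i.e. switching rays — only happens when the path is near $0$, which is exactly what the case analysis above (backed by Proposition \ref{thar}(iii) and the excursion structure of $S^+$) is for. A secondary technical point is handling the two-sided time index and the fact that there are countably many starting times $s_i$ simultaneously; but since tightness is required only coordinatewise and the metric $d$ sums the coordinates with summable weights, this introduces no real difficulty beyond invoking the standard fact that a countable product of tight laws is tight. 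Once all coordinates are handled, tightness of $(\mathbb{P}_n)$ in $\widetilde{C}$ follows.
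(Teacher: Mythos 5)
Your proposal is correct, and it reduces the problem exactly as the paper does: tightness in $\widetilde C$ follows from tightness of each coordinate, since the metric $d$ metrizes the (countable) product topology (the paper invokes Proposition 2.4 of Ethier--Kurtz for this, which is the same fact you argue by hand), and the coordinate $S^{(n)}$ is handled by Donsker in both cases. Where you diverge is the treatment of the coordinates $\varPsi^{(n)}_{s_i,\cdot}$. The paper observes that $\varPsi_{p,p+\cdot}(0)$ is precisely the Markov chain $(\vec\eta.\overline Y)$ with transition matrix (\ref{srrr}), so by Propositions \ref{enri} and \ref{hajri11} its rescaling converges in law to Walsh Brownian motion started at $0$; convergence of each marginal gives tightness with no further work, and these marginal convergences are needed anyway in the next subsection to identify the limit $Z$. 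You instead prove tightness of $\varPsi^{(n)}_{s_i,\cdot}$ directly, without identifying any limit: using $|\varPsi_{p,n}|=S^+_{p,n}$ and the fact that ray changes occur only at zeros of $S^+$, you bound $d\bigl(\varPsi^{(n)}_{s_i,u},\varPsi^{(n)}_{s_i,v}\bigr)$ by (twice) the oscillation of the rescaled reflected walk $\tfrac{1}{\sqrt n}S^+_{\lfloor ns_i\rfloor,\lfloor n\cdot\rfloor}$ over $[u,v]$ — on a single ray it is the increment of $S^+$, and across a ray switch both $|\varPsi^{(n)}_{s_i,u}|$ and $|\varPsi^{(n)}_{s_i,v}|$ are dominated by that oscillation since $S^+$ vanishes in between — and then conclude by Billingsley's criterion, using Donsker plus continuity of $w\mapsto w-\inf w$ for the reflected walk. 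This is a sound and more elementary/self-contained route (it does not presuppose the Walsh--Donsker result), at the cost of a modulus-of-continuity argument the paper avoids; the paper's route is more economical in context because the stronger statement (convergence, not just tightness, of each $\varPsi$-coordinate) has already been established and is reused immediately afterwards.
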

\begin{proof} By Donsker theorem $\mathbb P_{S^{(n)}}\longrightarrow\mathbb P_{W}$ in $C(\R,\R)$ as $n\rightarrow\infty$ where $\mathbb P_{W}$ is the law of any Brownian motion on $\R$. Let $\mathbb P_{Z_{s_i}}$ be the law of any $W(\alpha_1,\cdots,\alpha_N)$ started at $0$ at time $s_i$. Plainly, the law of $\varPsi_{p,p+.}$ is given by (\ref{srrr}) and so by Propositions \ref{enri} and \ref{hajri11}, for all $i\in\N$, $\mathbb P_{\varPsi_{s_i,.}^{(n)}}\longrightarrow\mathbb P_{Z_{s_i}}$ in $C([s_i,+\infty[,G)$ as $n\rightarrow\infty$. Now the lemma holds using Proposition 2.4 \cite{MR838085} (page 107).  
 \end{proof}
Fix a sequence $(n_k,k\in\N)$ such that $Z^{n_k}\xrightarrow[\text{$k\rightarrow +\infty $}]{\text{law}}Z$ in $\widetilde{C}$. In the next paragraph, we will describe the law of $Z$. Notice that $(\varPsi_{p,n})_{p\leq n}$ and $S$ can be recovered from $(Z^{n_k})_{k\in\N}$. 
Using Skorokhod representation theorem, we may assume that $Z$ is defined on the original probability space and the preceding convergence holds almost surely.  Write $Z = (W,\psi_{s_1,.},\psi_{s_2,.},\cdots)$. Then, $(W_{t})_{t\in\R}$ is a Brownian motion on $\R$ and $(\psi_{s,t})_{t\geq s}$ is an $W(\alpha_1,\cdots,\alpha_N)$ started at $0$ for all $s\in \mathbb D$.
\subsubsection{Description of the limit process.} 
 Set $\vec{\gamma}_{s,t}=\vec{e}(\psi_{s,t}), s\in\mathbb D, s<t$ and define $\min_{u,v}=\min_{r\in[u,v]} W_r$, $u\leq v\in\R$. Then, we have
\begin{prop}\label{tharwet}
\begin{enumerate}
\item[(i)] For all $s\leq t, s\in  \mathbb D$, $|\psi_{s,t}|=W^{+}_{s,t}$.
\item[(ii)] For all $s<t, u<v,\ s, u \in  \mathbb D$, 
$$\mathbb P(\vec\gamma_{s,t} =\vec\gamma _{u,v}|\textrm{min}_{s,t}=\textrm{min}_{u,v})=1\ \textrm{if}\ \ \mathbb P({\textrm{min}_{s,t}}={\textrm{min}_{u,v}})>0.$$
\end{enumerate}
\end{prop}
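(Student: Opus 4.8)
The plan is to transfer the corresponding discrete identities from Proposition \ref{thar} to the limit by passing through the almost-sure convergence $Z^{n_k}\to Z$ furnished by the Skorokhod coupling. For part (i), I would start from the discrete identity $|\varPsi_{p,n}|=S^+_{p,n}$ of Proposition \ref{thar}(i). Applied along $p=\lfloor n_k s\rfloor$, $n=\lfloor n_k t\rfloor$ and after scaling by $1/\sqrt{n_k}$, this reads $|\varPsi^{(n_k)}_{s,t}|=(S^{(n_k)})^+_{s,t}+o(1/\sqrt{n_k})$, where $(S^{(n)})^+_{s,t}=S^{(n)}_t-\min_{r\in[s,t]}S^{(n)}_r$. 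The map $w\mapsto w_t-\min_{r\in[s,t]}w_r$ is continuous on $C(\R,\R)$ for the topology of uniform convergence on compacts, so the right-hand side converges a.s.\ to $W^+_{s,t}=W_t-\min_{s,t}$; the left-hand side converges a.s.\ to $|\psi_{s,t}|$ by continuity of $|\cdot|$ on $G$ and the $d_s$-convergence of the $s$-th coordinate. Hence $|\psi_{s,t}|=W^+_{s,t}$ a.s., first for $s\in\mathbb D$ and $t$ in a countable dense set, then for all $t\ge s$ by continuity of both sides in $t$.

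For part (ii), fix $s<t$, $u<v$ with $s,u\in\mathbb D$ and assume $\mathbb P(\min_{s,t}=\min_{u,v})>0$. The idea is to use the discrete statement of Proposition \ref{thar}(ii): for $p<n<q$, $\vec\eta_{p,q}=\vec\eta_{n,q}$ on the event $\{\min_{h\in[p,q]}S_h=\min_{h\in[n,q]}S_h\}$. This directly handles the nested case where the two intervals share the right endpoint; the general case with $s<u$, $t<v$ (or other configurations) has to be reduced to it. The natural reduction is: on $\{\min_{s,t}=\min_{u,v}\}$ one has $\min_{s,v}=\min_{u,v}=\min_{s,t}$, so applying the first identity of Proposition \ref{thar}(ii) with the triple $(s,u,v)$ gives $\vec\gamma_{s,v}=\vec\gamma_{u,v}$ on this event, and applying the second identity of Proposition \ref{thar}(ii) (on the event where additionally $S^+$ stays positive on $[t,v]$, equivalently $\min_{s,t}$ is not revisited) relates $\vec\gamma_{s,t}$ to $\vec\gamma_{s,v}$. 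So at the discrete level the equality $\vec\eta_{p,n}=\vec\eta_{n',q}$ holds on a suitable event; one then needs to check that this event, under scaling, exhausts $\{\min_{s,t}=\min_{u,v}\}$ up to a null set — intuitively the excess events involve the Brownian path returning exactly to a past minimum, which has probability zero.

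The delicate point, and what I expect to be the main obstacle, is that $\vec\gamma$ (equivalently the ray index $\vec e(\psi)$) is \emph{not} a continuous functional of the path: it is locally constant away from $\{|\psi|=0\}$ but jumps there, so one cannot simply pass $\vec\eta^{(n_k)}\to\vec\gamma$ to the limit pointwise. The way around this is to test against the event $\{\min_{s,t}=\min_{u,v}\}$, on which, for large $k$, the discrete minima over $[\lfloor n_k s\rfloor,\lfloor n_k t\rfloor]$ and over $[\lfloor n_k u\rfloor,\lfloor n_k v\rfloor]$ coincide with overwhelming probability (because the limiting minima coincide and are attained at a unique time a.s.); on that discrete event Proposition \ref{thar}(ii) forces $\vec\eta^{(n_k)}_{s,t}=\vec\eta^{(n_k)}_{u,v}$ exactly, and then $\psi^{(n_k)}_{s,\cdot}$ and $\psi^{(n_k)}_{u,\cdot}$ lie on the same ray from the first time their modulus is positive; passing to the limit, $\psi_{s,\cdot}$ and $\psi_{u,\cdot}$ lie on the same ray on the corresponding excursion, which gives $\vec\gamma_{s,t}=\vec\gamma_{u,v}$ there. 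Concretely I would: (1) show $\mathbf 1_{\{\min_{s,t}=\min_{u,v}\}}$ is an a.s.-continuity point so that the events match up along $n_k$; (2) use uniqueness of the argmin of Brownian motion to control the discrete minima; (3) invoke Proposition \ref{thar}(ii) on the matched discrete event; (4) take $k\to\infty$ using the a.s.\ convergence of the coordinates in $\widetilde C$. The uniqueness-of-minimum input and the matching of discrete with continuous minima are the technical heart; everything else is bookkeeping with the $\widetilde C$-metric.
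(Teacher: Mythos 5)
Your proposal follows essentially the same route as the paper: part (i) is the direct limit of Proposition \ref{thar}(i), and part (ii) is proved by reducing the general configuration to the two nested cases of Proposition \ref{thar}(ii), showing that on $\{\min_{s,t}=\min_{u,v}\}$ the corresponding discrete events (equal minima, plus $S^+>0$ on the relevant stretch) hold for all large $k$ via a.s. strict-inequality/unique-argmin arguments and uniform convergence, and then passing to the limit using continuity of $\vec e$ on $G^*$ together with $W^+_{s,t}>0$ a.s. at the fixed evaluation times. The only cosmetic difference is that you chain through $\vec\gamma_{s,v}$ while the paper chains through $\vec\gamma_{u,t}$ (after first isolating the two elementary lemmas (\ref{zez}) and (\ref{zeze})); this is immaterial.
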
 
\begin{proof}
(i) is immediate from the convergence of $Z^{n_k}$ towards $Z$ and Proposition \ref{thar} (i). (ii) We first prove that for all $s<t<u$, 
\begin{equation}\label{zez}
\mathbb P(\vec{\gamma}_{s,u}=\vec{\gamma}_{t,u}|\textrm{min}_{s,u}=\textrm{min}_{t,u})=1\ \textrm{if}\ s, t\in\mathbb D
\end{equation}
and 
\begin{equation}\label{zeze}
\mathbb P(\vec{\gamma}_{s,t}=\vec{\gamma}_{s,u}|\textrm{min}_{s,t}=\textrm{min}_{s,u})=1 \ \textrm{if}\ s\in\mathbb D.
\end{equation}
Fix  $s<t<u$ with $s, t\in\mathbb D$ and let show that a.s. 
\begin{equation}\label{jdid}\{\textrm{min}_{s,u}=\textrm{min} _{t,u}\}\subset\{\exists k_0,\ \ \vec\eta_{\lfloor n_ks \rfloor,\lfloor n_ku \rfloor}=\vec\eta_{\lfloor n_kt \rfloor,\lfloor n_ku \rfloor}\ \textrm{for all}\  k\geq k_0\}.
\end{equation}
We have $\{\textrm{min}_{s,u}=\textrm{min} _{t,u}\}=\{\textrm{min}_{s,t}<\textrm{min} _{t,u}\}\ \ a.s.$ By uniform convergence the last set is contained in $$\{\exists k_0, \min_{\lfloor n_ks \rfloor\leq j\leq\lfloor n_kt \rfloor} S_j<\min_{ \lfloor n_kt \rfloor\leq j\leq\lfloor n_ku \rfloor} S_j\ \textrm{for all}\ \ k\geq k_0\}$$
which is a subset of $$\{\exists k_0, \min_{\lfloor n_ks \rfloor\leq j\leq\lfloor n_ku \rfloor} S_j=\min_{\lfloor n_kt \rfloor\leq j\leq\lfloor n_ku \rfloor} S_j\ \textrm{for all}\ k\geq k_0\}.$$
This gives (\ref{jdid}) using Proposition \ref{thar} (ii). Since $x\longrightarrow\vec{e}(x)$ is continuous on $G^*$, on $\{\textrm{min}_{s,u}=\textrm{min}_{t,u}\}$, we have $$\vec{\gamma}_{s,u}=\lim_{k\rightarrow\infty}\vec{e}(\frac{1}{\sqrt n_k}\varPsi_{\lfloor n_ks \rfloor,\lfloor n_ku \rfloor})=\lim_{k\rightarrow\infty}\vec{e}(\frac{1}{\sqrt n_k}\varPsi_{\lfloor n_kt \rfloor,\lfloor n_ku \rfloor})=\vec{\gamma}_{t,u}\ \ a.s.$$
which proves (\ref{zez}). If $s\in\mathbb D, t>s$ and $\textrm{min} _{s,t}=\textrm{min} _{s,u}$, then $s$ and $t$ are in the same excursion interval of $W^+_{s,}$ and so $W^+_{s,r}>0$ for all $r\in[t,u]$. As preceded, $\{\textrm{min}_{s,t}=\textrm{min} _{s,u}\}$ is a.s. included in
$$\{\exists k_0, \min_{\lfloor n_ks \rfloor\leq j\leq\lfloor n_kt \rfloor} S_j=\min_{\lfloor n_ks \rfloor\leq j\leq\lfloor n_ku \rfloor} S_j\ , S^+_{\lfloor n_ks \rfloor,j}>0\ \forall j\in[\lfloor n_kt \rfloor,\lfloor n_ku \rfloor], k\geq k_0\}.$$
Now it is easy to deduce (\ref{zeze}) using Proposition \ref{thar} (ii). To prove (ii), suppose that $s\leq u, \textrm{min}_{s,t}=\textrm{min}_{u,v}$. There are two cases to discuss, (a) $s\leq u\leq v\leq t$, (b) $s\leq u\leq t\leq v$ (in any other case $\mathbb P({\textrm{min}_{s,t}}={\textrm{min}_{u,v}})=0$). In case (a), we have $\textrm{min}_{s,t}=\textrm{min}_{u,v}=\textrm{min}_{u,t}$ and so $\vec{\gamma}_{s,t}=\vec{\gamma}_{u,t}=\vec{\gamma}_{u,v}$ by (\ref{zez}) and (\ref{zeze}). Similarly in case (b), we have $\vec{\gamma}_{s,t}=\vec{\gamma}_{u,t}=\vec{\gamma}_{u,v}$.
\end{proof}
 
\begin{prop}\label{karama}
Fix $s<t, s\in\mathbb D, n\geq1$ and $\{(s_i,t_i); 1\leq i\leq n\}$ with $s_i<t_i, s_i\in\mathbb D$. Then 
\begin{enumerate}
\item[(i)] $\vec\gamma_{s,t}$ is independent of $\sigma(W)$.
\item[(ii)] For all $i\in[1,N]$, $h\in [1,n]$, we have
$$E[1_{\{\vec\gamma_{s,t}=\vec{e}_i\}}|(\vec\gamma_{s_i,t_i})_{1\leq i\leq n}, W]=1_{\{\vec\gamma_{s_h,t_h}=\vec{e}_i\}} \ \textrm{on}\ \{\textrm{min}_{s,t}=\textrm{min}_{s_h,t_h}\}.$$
\item[(iii)] The law of $\vec\gamma_{s,t}$ knowing $(\vec\gamma_{s_i,t_i})_{1\leq i\leq n}$ and $W$ is given by
$\displaystyle\sum_{i=1}^{N}\alpha_i\delta_{\vec{e}_i}$ when $\textrm{min}_{s,t}\notin \{\textrm{min}_{s_i,t_i}; 1\leq i\leq n\}$.
\end{enumerate}
This entirely describes the law of $(W,\psi_{s,\cdot},s\in\mathbb D)$ in $\widetilde C$ independently of $(n_k,k\in\N)$ and consequently $Z^{n}\xrightarrow[\text{$n\rightarrow +\infty $}]{\text{law}}Z$ in $\widetilde{C}$.
\end{prop}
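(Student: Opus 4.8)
The plan is to derive (i)--(iii) from the corresponding facts about the discrete flow $\varPsi$, transported through the a.s.\ convergence $Z^{n_k}\to Z$ given by the Skorokhod coupling. The basic discrete observation is this: for fixed $p\le q$, on the $\sigma(S)$-measurable event $\{S^+_{p,q}>0\}$ one has $\vec{\eta}_{p,q}=\vec{e}(\varPsi_{p,q})=\vec{\eta}_{J(p,q)}$, where $J(p,q):=\sup\{j<q:S^+_{p,j}=0\}$ is the left endpoint of the excursion of $S^+_{p,\cdot}$ straddling $q$ (this is read off from the definition and cocycle property of $\varPsi$ and Proposition~\ref{thar}(i), as in the proof of Proposition~\ref{thar}(ii)), and moreover $S_{J(p,q)}=\min_{h\in[p,q]}S_h$. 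Since $(\vec{\eta}_j)_j$ is i.i.d.\ with law $\sum_i\alpha_i\delta_{\vec{e}_i}$ and independent of $S$, this shows that conditionally on $S$ and on $\{S^+_{p,q}>0\}$ the variable $\vec{\eta}_{p,q}$ has law $\sum_i\alpha_i\delta_{\vec{e}_i}$. I will use repeatedly that, on the full-probability event $\{W^+_{s,t}>0\}$, $\frac1{\sqrt{n_k}}\varPsi_{\lfloor n_ks\rfloor,\lfloor n_kt\rfloor}\to\psi_{s,t}\neq0$ (Proposition~\ref{tharwet}) and hence, by continuity of $\vec{e}$ on $G^{*}$, $\vec{\eta}_{\lfloor n_ks\rfloor,\lfloor n_kt\rfloor}\to\vec{\gamma}_{s,t}$ a.s.

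For (i): conditioning on $S$ yields, for every bounded continuous $g$ on $C(\R,\R)$ and every $f$ on $\{\vec{e}_1,\dots,\vec{e}_N\}$, the identity $E[f(\vec{\eta}_{\lfloor ns\rfloor,\lfloor nt\rfloor})g(S^{(n)})1_{B_n}]=(\sum_i\alpha_if(\vec{e}_i))E[g(S^{(n)})1_{B_n}]$ with $B_n:=\{S^+_{\lfloor ns\rfloor,\lfloor nt\rfloor}>0\}$; letting $n=n_k\to\infty$, so that $S^{(n_k)}\to W$, $1_{B_{n_k}}\to1$ and $\vec{\eta}_{\lfloor n_ks\rfloor,\lfloor n_kt\rfloor}\to\vec{\gamma}_{s,t}$ a.s., gives $E[f(\vec{\gamma}_{s,t})g(W)]=(\sum_i\alpha_if(\vec{e}_i))E[g(W)]$, which is (i) (and identifies the law of $\vec{\gamma}_{s,t}$). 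Statement (ii) is a reformulation of Proposition~\ref{tharwet}(ii): applied with $(u,v)=(s_h,t_h)$ (legitimate since $s,s_h\in\mathbb D$) it gives $\vec{\gamma}_{s,t}=\vec{\gamma}_{s_h,t_h}$ a.s.\ on $\{\min_{s,t}=\min_{s_h,t_h}\}$ (vacuously when this event is null); as this event is $\sigma(W)$-measurable, hence measurable for $\mathcal G:=\sigma((\vec{\gamma}_{s_i,t_i})_i,W)$, the indicator $1_{\{\vec{\gamma}_{s,t}=\vec{e}_i\}}$ agrees there with the $\mathcal G$-measurable $1_{\{\vec{\gamma}_{s_h,t_h}=\vec{e}_i\}}$, which is exactly the claimed conditional-expectation identity.

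The substance is (iii). The discrete input is: fixing $p\le q$ and $(p_i,q_i)_{1\le i\le n}$, on the $\sigma(S)$-measurable event $B:=\{S^+_{p,q}>0\}\cap\bigcap_i\{\min_{h\in[p,q]}S_h\neq\min_{h\in[p_i,q_i]}S_h\}$ we have, for each $i$ with $S^+_{p_i,q_i}>0$, $S_{J(p_i,q_i)}=\min_{h\in[p_i,q_i]}S_h\neq\min_{h\in[p,q]}S_h=S_{J(p,q)}$, hence $J(p,q)\neq J(p_i,q_i)$; since $(\vec{\eta}_j)_j$ is i.i.d.\ and independent of $S$, conditionally on $S$ and on $B$ the variable $\vec{\eta}_{p,q}=\vec{\eta}_{J(p,q)}$ is independent of $(\vec{\eta}_{p_i,q_i})_{1\le i\le n}$ with law $\sum_i\alpha_i\delta_{\vec{e}_i}$ (those $\vec{\eta}_{p_i,q_i}$ with $S^+_{p_i,q_i}=0$ equalling $\vec{e}_N$ by convention). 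Multiplying the corresponding conditional identity by $1_B$ and by a bounded continuous functional of $S^{(n)}$, and taking $p,q,p_i,q_i$ to be $\lfloor ns\rfloor,\lfloor nt\rfloor,\lfloor ns_i\rfloor,\lfloor nt_i\rfloor$ (so $B=B_n$), gives $E[1_{B_n}f(\vec{\eta}_{\lfloor ns\rfloor,\lfloor nt\rfloor})g(S^{(n)})h((\vec{\eta}_{\lfloor ns_i\rfloor,\lfloor nt_i\rfloor})_i)]=(\sum_i\alpha_if(\vec{e}_i))E[1_{B_n}g(S^{(n)})h((\vec{\eta}_{\lfloor ns_i\rfloor,\lfloor nt_i\rfloor})_i)]$ for all bounded continuous $g$ and all $h$. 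The plan is then to let $n=n_k\to\infty$, and \emph{the main obstacle} is to show that $1_{B_{n_k}}\to1_A$ a.s., where $A:=\{\min_{s,t}\notin\{\min_{s_i,t_i}:1\le i\le n\}\}$. On $A$ one has $W^+_{s,t}>0$ a.s.\ and, by uniform convergence, $\frac1{\sqrt{n_k}}\min_{h\in[\lfloor n_ks\rfloor,\lfloor n_kt\rfloor]}S_h\to\min_{s,t}$ and likewise for each $(s_i,t_i)$, so the defining constraints of $B_{n_k}$ hold for large $k$; on $A^c$, if $\min_{s,t}=\min_{s_{i_0},t_{i_0}}$, the a.s.\ uniqueness and interiority of the argmin $r^{*}$ of $W$ over $[s,t]$ — which is then also the argmin of $W$ over $[s_{i_0},t_{i_0}]$ — together with convergence of the corresponding discrete argmins to $r^{*}$, forces $\min_{h\in[\lfloor n_ks\rfloor,\lfloor n_kt\rfloor]}S_h=\min_{h\in[\lfloor n_ks_{i_0}\rfloor,\lfloor n_kt_{i_0}\rfloor]}S_h$ for large $k$, so $\omega\notin B_{n_k}$. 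Granting this, dominated convergence (all terms bounded, $g(S^{(n_k)})h(\cdots)\to g(W)h((\vec{\gamma}_{s_i,t_i})_i)$ and $f(\vec{\eta}_{\lfloor n_ks\rfloor,\lfloor n_kt\rfloor})\to f(\vec{\gamma}_{s,t})$ a.s.) yields $E[1_Af(\vec{\gamma}_{s,t})g(W)h((\vec{\gamma}_{s_i,t_i})_i)]=(\sum_i\alpha_if(\vec{e}_i))E[1_Ag(W)h((\vec{\gamma}_{s_i,t_i})_i)]$; since $g,h$ are arbitrary and $A\in\mathcal G$, this is (iii).

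Finally, (i)--(iii) furnish a recursive description, independent of $(n_k)$, of the conditional law given $W$ of any finite family $(\vec{\gamma}_{s_j,t_j})_{1\le j\le m}$ with $s_j\in\mathbb D$: processing the pairs in turn, $\vec{\gamma}_{s_l,t_l}$ equals $\vec{\gamma}_{s_j,t_j}$ on $\{\min_{s_l,t_l}=\min_{s_j,t_j}\}$ for any earlier $j$ (consistently, by (ii)) and is an independent $\sum_i\alpha_i\delta_{\vec{e}_i}$ draw on the complementary event $\{\min_{s_l,t_l}\notin\{\min_{s_j,t_j}:j<l\}\}$ (by (iii)). Since $\psi_{s,t}=W^{+}_{s,t}\vec{\gamma}_{s,t}$ and $W^{+}_{s,\cdot}$ is a functional of $W$, this determines the finite-dimensional distributions, hence the law, of $(W,\psi_{s,\cdot};s\in\mathbb D)$. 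As $(\mathbb P_n)$ is tight (Lemma~\ref{tight}) and every subsequential limit has this same law, $Z^{n}$ converges in law to $Z$ in $\widetilde C$.
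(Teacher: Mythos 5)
Your proposal is correct, and parts (i) and (ii) coincide with the paper's (the paper simply declares (i) ``clear'' and derives (ii) from Proposition~\ref{tharwet}(ii), exactly as you do, the key point being that $\{\textrm{min}_{s,t}=\textrm{min}_{s_h,t_h}\}$ is $\sigma(W)$-measurable). For (iii), however, you take a genuinely different route. The paper stays at the level of the limit objects: it orders all the endpoints as $r_1<\cdots<r_m$, restricts to the event where $\textrm{min}_{s,t}=\textrm{min}_{r_j,r_{j+1}}$ while differing from every $\textrm{min}_{s_i,t_i}$, notes that there $\vec\gamma_{s,t}=\vec\gamma_{r_j,r_{j+1}}$ and that each $\vec\gamma_{s_i,t_i}$ coincides with some $\vec\gamma_{r_k,r_{k+1}}$, $k\neq j$, and then invokes the joint independence of $\vec\gamma_{r_1,r_2},\dots,\vec\gamma_{r_{m-1},r_m},W$ (a fact the paper takes as known from the construction). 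You instead prove the conditional-independence statement directly at the discrete level --- $\vec\eta_{p,q}=\vec\eta_{J(p,q)}$ with $J(p,q)$ a $\sigma(S)$-measurable index sitting at the running minimum, distinct from the $J(p_i,q_i)$ on the event $B$, so that conditionally on $S$ the relevant $\vec\eta$'s decouple --- and then pass to the limit, the real work being the a.s.\ convergence $1_{B_{n_k}}\to 1_A$. Your treatment of that step is right: on $A$ it follows from uniform convergence of the rescaled minima, and on $A^c$ from the a.s.\ uniqueness (and interiority) of the common argmin $r^{*}$, which localizes both discrete argmins in the overlap of the two discrete index ranges and forces the two discrete minima to agree for large $k$ (each discrete argmin eventually lies in the other interval's index range, so the two minima dominate each other); you state this compressedly but the details are routine, as is the monotone-class step upgrading continuous functionals $g(W)$ to general bounded $\sigma(W)$-measurable ones. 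What each approach buys: the paper's argument is shorter but leans on the unproved (though clear from the discrete picture) independence of the consecutive-interval directions together with $W$; yours is more self-contained, deriving everything from the i.i.d.\ structure of $(\vec\eta_j)$ and the coupling $Z^{n_k}\to Z$, at the cost of the argmin-localization lemma. Your closing recursion describing the law of $(W,\psi_{s,\cdot},s\in\mathbb D)$ and the appeal to tightness (Lemma~\ref{tight}) to upgrade subsequential convergence to full convergence is exactly what the paper leaves implicit, and is sound.
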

\begin{proof}
(i) is clear. (ii) is a consequence of Proposition \ref{tharwet} (ii). (iii) Write $\{s,t,s_i,t_i, 1\leq i\leq n\}=\{r_k, 1\leq k\leq m\}$ with $r_j<r_{j+1}$ for all $1\leq j\leq m-1$. Suppose that $s=r_i, t=r_h$ with $i<h$. Then a.s. $\{\textrm{min}_{r_j,r_{j+1}}, i\leq j\leq h-1\}$ are distinct and it will be sufficient to show that $\vec\gamma_{s,t}$ is independent of $\sigma((\vec\gamma_{s_i,t_i})_{1\leq i\leq n},W)$ conditionally to $A=\{\textrm{min}_{s,t}=\textrm{min}_{r_j,r_{j+1}},\ \textrm{min}_{s,t}\neq\textrm{min}_{s_i,t_i}\ \textrm{for all}\ 1\leq i\leq n\}$ for $j\in [i,h-1]$. On $A$, we have $\vec\gamma_{s,t}=\vec\gamma_{r_j,r_{j+1}}$, $\{\textrm{min}_{s_i,t_i}, 1\leq i\leq n\}\subset \{\textrm{min}_{r_k,r_{k+1}}, k\neq j\}$ and so $\{\vec\gamma_{s_i,t_i}, 1\leq i\leq n\}\subset \{\vec\gamma_{r_k,r_{k+1}}, k\neq j\}$. Since $\vec\gamma_{r_1,r_{2}}, \cdots,\vec\gamma_{r_{m-1},r_{m}}, W$ are independent, it is now easy to conclude.
\end{proof}
In the sequel, we still assume that all processes are defined on the same probability space and that $Z^{n}\xrightarrow[\text{$n\rightarrow +\infty $}]{\text{a.s.}}Z$ in $\widetilde{C}$. In particular $\forall s\in \mathbb D, T>0$,
\begin{equation}\label{hlimaa} 
\lim\limits_{\substack{k \to +\infty}}\displaystyle\sup_{s\leq t\leq s+T}|\frac{1}{\sqrt k}\varPsi_{\lfloor ks \rfloor,\lfloor kt \rfloor}-\psi_{s,t}|=0\ \ a.s.
\end{equation}
\subsubsection{Extension of the limit process.} For a fixed $s< t$, $\textrm{min}_{s,t}$ is attained in $]s,t[$ a.s. By Proposition \ref{tharwet} (ii), on a measurable set $\Omega_{s,t}$ with probability 1, $\lim\limits_{\substack{s'\to s+,s'\in{\mathbb D}}} \vec\gamma_{s',t}$ exists. Define $\vec\varepsilon_{s,t}=\lim\limits_{\substack{s'\to s+,s'\in{\mathbb D}}} \vec\gamma_{s',t}$ on $\Omega_{s,t}$ and give an arbitrary value to $\vec\varepsilon_{s,t}$ on $\Omega_{s,t}^{c}$. Now, let $\varphi_{s,t}=\vec\varepsilon_{s,t}W^+_{s,t}$. Then for all $s\in\mathbb D, t>s$, $(\vec\varepsilon_{s,t},\varphi_{s,t})$ is a modification of $(\vec\gamma_{s,t},\psi_{s,t})$. For all $s\in\R$, $t>s,\ \ \varphi_{s,t}=\lim\limits_{\substack{n\to \infty}}\varphi_{s_n,t}\ a.s.$, where $s_n=\frac{{\lfloor 2^{n}s\rfloor}+1}{2^n}$ and therefore $(\varphi_{s,t})_{t\geq s}$ is an $W(\alpha_1,\cdots,\alpha_N)$ started at $0$. Again, Proposition \ref{tharwet} (ii) yields 
\begin{equation}\label{rhayem}
\forall s<t, u<v,\ \mathbb P(\vec\varepsilon_{s,t} =\vec\varepsilon_{u,v}|\textrm{min}_{s,t}=\textrm{min} _{u,v})=1\ \textrm{if}\ \ \mathbb P({\textrm{min}_{s,t}}={\textrm{min}_{u,v}})>0.
\end{equation}
Define: $$\varphi_{s,t}(x)=(x+\vec{e}(x)W_{s,t}){1}_{\{t\leq\tau_{s,x}\}}+\varphi_{s,t}{1}_{\{t>\tau_{s,x}\}}, s\leq t, x\in G,$$
 where $W_{s,t}=W_t-W_s$ and $\tau_{s,x}$ is given by (\ref{didi}).

\begin{prop}\label{valerie}
 Let $x\in G, x_n\in \frac{1}{\sqrt{n}}G_{\mathbb N}, \lim_{n\rightarrow\infty}x_n=x$, $s\in \R, T>0$. Then, we have
$$\lim\limits_{\substack{n \to +\infty}}\displaystyle\sup_{s\leq t\leq s+T}|\frac{1}{\sqrt n}\varPsi_{\lfloor ns \rfloor,\lfloor nt \rfloor}(\sqrt n x_n)-\varphi_{s,t}(x)|=0\ \ a.s.$$
\end{prop}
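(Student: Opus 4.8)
The plan is to split the supremum according to whether the particle starting at $x$ has already reached $0$ by time $t$, i.e. according to the hitting time $\tau_{s,x}$ and its discrete analogue $T_{\lfloor ns\rfloor,\sqrt n x_n}$. By Proposition~\ref{thar}~(iii) we have the exact decomposition $\frac{1}{\sqrt n}\varPsi_{\lfloor ns\rfloor,\lfloor nt\rfloor}(\sqrt n x_n)=(x_n+\vec e(x_n)S^{(n)}_{s,t})1_{\{\lfloor nt\rfloor\le T_{\lfloor ns\rfloor,\sqrt n x_n}\}}+\frac{1}{\sqrt n}\varPsi_{\lfloor ns\rfloor,\lfloor nt\rfloor}1_{\{\lfloor nt\rfloor>T_{\lfloor ns\rfloor,\sqrt n x_n}\}}$, which mirrors the definition of $\varphi_{s,t}(x)$ with $W_{s,t}$ and $\tau_{s,x}$. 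So first I would reduce everything to three ingredients: (a) the uniform a.s. convergence $S^{(n)}_{s,\cdot}\to W_{s,\cdot}$ on $[s,s+T]$ (Donsker, already in force under the Skorokhod coupling); (b) the convergence of the discrete hitting time $\frac{1}{n}T_{\lfloor ns\rfloor,\sqrt n x_n}\to\tau_{s,x}$ a.s.; and (c) the already-established convergence \eqref{hlimaa}, $\sup_{s\le t\le s+T}|\frac{1}{\sqrt n}\varPsi_{\lfloor ns\rfloor,\lfloor nt\rfloor}-\psi_{s,t}|\to0$ a.s., valid for $s\in\mathbb D$ — plus the fact that $\psi_{s,\cdot}$ and $\varphi_{s,\cdot}$ agree as processes (the construction of $\varphi$ made $(\vec\varepsilon_{s,t},\varphi_{s,t})$ a modification of $(\vec\gamma_{s,t},\psi_{s,t})$ for $s\in\mathbb D$).

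The hitting-time convergence in (b) is the standard "level crossing" fact: since $\tau_{s,x}=\inf\{r\ge s:W_{s,r}=-|x|\}$ and $W$ a.s. crosses strictly below $-|x|$ immediately after $\tau_{s,x}$ (the local martingale cannot have a local minimum equal to $-|x|$ at $\tau_{s,x}$), uniform convergence of $S^{(n)}$ to $W$ forces $\frac1n T_{\lfloor ns\rfloor,\sqrt n x_n}\to\tau_{s,x}$; here one also uses $|x_n|\to|x|$ so that the discrete level $-|x_n|\sqrt n$ matches. On the event $\{t<\tau_{s,x}\}$, for $n$ large one has $\lfloor nt\rfloor<T_{\lfloor ns\rfloor,\sqrt n x_n}$, both processes are in the "free" regime $x+\vec e(x)(\cdot)$, and the difference is controlled by $|x_n-x|+\sup_t|S^{(n)}_{s,t}-W_{s,t}|\to0$. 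On $\{t>\tau_{s,x}\}$, for $n$ large $\lfloor nt\rfloor>T_{\lfloor ns\rfloor,\sqrt n x_n}$ and, by the cocycle property of $\varPsi$ past its first zero (Proposition~\ref{thar}~(iii)), $\varPsi_{\lfloor ns\rfloor,\lfloor nt\rfloor}(\sqrt n x_n)=\varPsi_{\lfloor ns\rfloor,\lfloor nt\rfloor}$, which by \eqref{hlimaa} converges to $\psi_{s,t}=\varphi_{s,t}$; similarly $\varphi_{s,t}(x)=\varphi_{s,t}$ on that event. The boundary time $t=\tau_{s,x}$ contributes nothing because there $\varphi_{s,t}(x)=0$ and both descriptions glue continuously.

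Two points need care and I expect the main obstacle to be the \emph{uniformity in $t$} near the random time $\tau_{s,x}$, where the indicator $1_{\{\lfloor nt\rfloor\le T_{\lfloor ns\rfloor,\sqrt n x_n}\}}$ can disagree with $1_{\{t\le\tau_{s,x}\}}$ on a shrinking-but-nonempty interval. I would handle this by an $\varepsilon$-sandwich: fix $\varepsilon>0$, and split $[s,s+T]$ into $[s,\tau_{s,x}-\varepsilon]$, $[\tau_{s,x}-\varepsilon,\tau_{s,x}+\varepsilon]$, $[\tau_{s,x}+\varepsilon,s+T]$. On the first and third pieces the indicators stabilize for $n$ large (by (b)) and the estimate above applies; on the middle piece use the a.s. modulus of continuity of $t\mapsto\varphi_{s,t}(x)$ together with $|\varPsi|=S^+$ (Proposition~\ref{thar}~(i)) and uniform convergence of $S^{(n)}$ to bound both terms by $o(1)+$ (oscillation of $W^+$ over an interval of length $\le 2\varepsilon$ near $\tau_{s,x}$), then let $n\to\infty$ and $\varepsilon\to0$. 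The second delicate point is upgrading \eqref{hlimaa}, currently stated for $s\in\mathbb D$, to the arbitrary $s\in\R$ appearing here: I would write $s_n=\frac{\lfloor 2^n s\rfloor+1}{2^n}\downarrow s$, use $\varphi_{s,t}=\lim_n\varphi_{s_n,t}$ a.s. (already recorded in the extension paragraph), the flow/cocycle relation $\varPsi_{\lfloor ns\rfloor,\cdot}=\varPsi_{\lfloor ns_m\rfloor,\cdot}\circ\varPsi_{\lfloor ns\rfloor,\lfloor ns_m\rfloor}$, and the fact that $\lfloor ns\rfloor$ and $\lfloor ns_m\rfloor$ differ by $o(n)$, so the extra excursion contributions are uniformly negligible. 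Assembling these estimates gives the claimed a.s. uniform convergence.
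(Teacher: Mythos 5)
Your overall architecture is the same as the paper's: the decomposition of $\varPsi_{\lfloor ns\rfloor,\lfloor nt\rfloor}(\sqrt n x_n)$ via Proposition \ref{thar} (iii) (the paper's bound (\ref{legall}) with the terms $Q^{1,n}$, $Q^{2,n}$), the a.s. convergence of the discrete hitting time $\frac1n T_{\lfloor ns\rfloor,\sqrt n x_n}\to\tau_{s,x}$ proved exactly by your level-crossing argument (the paper's (\ref{sse})), and the control of the indicator mismatch near $\tau_{s,x}$ using that both flows have small modulus there. Those parts are fine and match the paper.

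The genuine gap is at the step you yourself flag as the ``second delicate point'': upgrading (\ref{hlimaa}) from dyadic $s$ to arbitrary real $s$, i.e.\ proving the paper's (\ref{sout}). Your justification --- that $\lfloor ns\rfloor$ and $\lfloor ns_m\rfloor$ ``differ by $o(n)$, so the extra excursion contributions are uniformly negligible'' --- does not hold up. First, for fixed $m$ the offsets differ by roughly $n(s_m-s)$, which is of order $n$, not $o(n)$. More importantly, smallness of $s_m-s$ alone does not make the discrepancy between $\varPsi_{\lfloor ns\rfloor,\cdot}$ and $\varPsi_{\lfloor ns_m\rfloor,\cdot}$ (or between $\varphi_{s,\cdot}$ and $\varphi_{s_m,\cdot}$) uniformly small on $[s_m,s+T]$: if the minimum of $W$ over $[s,t]$ is attained inside $[s,s_m]$ and is never beaten on $[s_m,t]$, the two flows sit on excursions whose directions were chosen at different zeros, so they can point along different rays while both have modulus of order $W^+_{s,t}$, which can be of order $1$; for fixed $m$ this happens on a set of $t$'s of positive length with positive probability, and the discrete picture is the same. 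What is needed is a merge-time argument: the two flows coincide only after the first time past $s_m$ at which the path drops below its minimum over $[s,s_m]$, and one must show this merge time shrinks to $s$ and that before it both flows are uniformly small. This is exactly what the paper's construction delivers: it takes $\mathcal S_p$, the a.s.\ unique argmin of $W$ on $[s,s+\frac1p]$, chooses a \emph{random} dyadic $s'\in\,]s,\mathcal S_p[$, so that $\min_{s,s'}>\min_{s',t}$ for all $t\in[\mathcal S_p,s+T]$; then Proposition \ref{thar} (ii) gives $\varPsi_{\lfloor ns'\rfloor,\lfloor nt\rfloor}=\varPsi_{\lfloor ns\rfloor,\lfloor nt\rfloor}$ for $n$ large and (\ref{rhayem})--(\ref{buqet}) give $\varphi_{s,t}=\varphi_{s',t}$ on that range, while on $[s,\mathcal S_p]$ both quantities are bounded by $\sup_{s\le t\le s+\frac1p}(\frac{1}{\sqrt n}S^+_{\lfloor ns\rfloor,\lfloor nt\rfloor}+W^+_{s,t})$, which is killed by letting $n\to\infty$ and then $p\to\infty$. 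Your deterministic sequence $s_m$ could be salvaged by introducing and controlling the merge time, but as written the assertion of negligibility is unsupported precisely at the crux of the proof.
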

\begin{proof} 
Let $s'$ be a dyadic number such that $s< s'< s+T$. By (\ref{rhayem}), for $t>s'$:
$$\{\textrm{min}_{s,t}=\textrm{min}_{s',t}\}\subset \{\varphi_{s,t}=\varphi_{s',t}\} \ \ a.s.$$
and so, a.s.
$$\forall t>s', t\in\mathbb D;\ \{\textrm{min}_{s,t}=\textrm{min}_{s',t}\}\subset\{\varphi_{s,t}=\varphi_{s',t}\}.$$
If $t>s', \textrm{min}_{s,t}=\textrm{min}_{s',t}$ and $t_n\in\mathbb D, t_n\downarrow t$ as $n\rightarrow\infty$, then $\textrm{min}_{s,t_n}=\textrm{min}_{s',t_n}$ which entails that $\varphi_{s,t_n}=\varphi_{s',t_n}$ and therefore $\varphi_{s,t}=\varphi_{s',t}$ by letting $n\rightarrow\infty$. This shows that a.s.
$$\forall t>s';\ \{\textrm{min}_{s,t}=\textrm{min}_{s',t}\}\subset\{\varphi_{s,t}=\varphi_{s',t}\}\ .$$
As a result a.s.
\begin{equation}\label{buqet}
\forall s'\in \mathbb D\cap ]s,s+T[, \forall t>s';\ \{\textrm{min}_{s,t}=\textrm{min}_{s',t}\}\subset\{\varphi_{s,t}=\varphi_{s',t}\}.
\end{equation}
By standard properties of Brownian paths, a.s. $\displaystyle\textrm{min}_{s,s+T}\notin \{W_s,W_{s+T}\}$ and
$$\forall p\in\N^*; \displaystyle\textrm{min}_{s,s+\frac{1}{p}}<W_s,\ \displaystyle\textrm{min}_{s,s+\frac{1}{p}}\neq W_{s+\frac{1}{p}},\exists ! u_p\in]s,s+\frac{1}{p}[:\displaystyle\textrm{min}_{s,s+\frac{1}{p}}=W_{u_p}.$$ The reasoning below holds almost surely: Take $p\geq 1, \displaystyle\textrm{min}_{s,s+\frac{1}{p}}>\displaystyle\textrm{min}_{s,s+T}$. Let $\mathcal S_{p}\in]s,s+\frac{1}{p}[$: $\displaystyle\textrm{min}_{s,s+\frac{1}{p}}=W_{\mathcal S_{p}}$ and $s'$ be a (random) dyadic number in $]s,\mathcal S_{p}[$. Then $\textrm{min}_{s,s'}>\textrm{min}_{s',t}$ for all $t\in[\mathcal S_p,s+T]$. By uniform convergence: 
$$\exists n_0\in\N: \forall n\geq n_0,\ \forall \mathcal S_p\leq t\leq s+T,\ \min_{u\in [s,s']}S_{{\lfloor nu \rfloor}}>\min_{u\in [s',t]}S_{{\lfloor nu \rfloor}}\ \textrm{and so}\ \ \varPsi_{\lfloor ns' \rfloor,\lfloor nt \rfloor}=\varPsi_{\lfloor ns \rfloor,\lfloor nt \rfloor}.$$
Therefore for $n\geq n_0$, we have $$\sup_{\mathcal S_p\leq t\leq s+T}|\frac{1}{\sqrt n}\varPsi_{\lfloor ns \rfloor,\lfloor nt \rfloor}-\varphi_{s,t}|=\sup_{\mathcal S_p\leq t\leq s+T}|\frac{1}{\sqrt n}\varPsi_{\lfloor ns' \rfloor,\lfloor nt \rfloor}-\varphi_{s',t}|\ (\textrm{using (\ref{buqet}))}$$ and so
\begin{eqnarray}
\sup_{s\leq t\leq s+T}|\frac{1}{\sqrt n}\varPsi_{\lfloor ns \rfloor,\lfloor nt \rfloor}-\varphi_{s,t}|&\leq& \sup_{s\leq t\leq \mathcal S_p}|\frac{1}{\sqrt n}\varPsi_{\lfloor ns \rfloor,\lfloor nt \rfloor}-\varphi_{s,t}|+\sup_{\mathcal S_p\leq t\leq s+T}|\frac{1}{\sqrt n}\varPsi_{\lfloor ns \rfloor,\lfloor nt \rfloor}-\varphi_{s,t}|\nonumber\\
&\leq& \sup_{s\leq t\leq {s+\frac{1}{p}}}(\frac{1}{\sqrt n}S^+_{\lfloor ns \rfloor,\lfloor nt \rfloor}+W^+_{s,t})+\sup_{\mathcal S_p\leq t\leq s+T}|\frac{1}{\sqrt n}\varPsi_{\lfloor ns' \rfloor,\lfloor nt \rfloor}-\varphi_{s',t}|\nonumber\\
&\leq& \sup_{s\leq t\leq {s+\frac{1}{p}}}(\frac{1}{\sqrt n}S^+_{\lfloor ns \rfloor,\lfloor nt \rfloor}+W^+_{s,t})+\sup_{s'\leq t\leq s'+T}|\frac{1}{\sqrt n}\varPsi_{\lfloor ns' \rfloor,\lfloor nt \rfloor}-\varphi_{s',t}|.\nonumber\
\end{eqnarray}
From (\ref{hlimaa}), a.s.
$\forall u\in\mathbb D, \lim\limits_{\substack{n \to +\infty}}\displaystyle\sup_{u\leq t\leq u+T}|\frac{1}{\sqrt n}\varPsi_{\lfloor nu \rfloor,\lfloor nt \rfloor}-\varphi_{u,t}|=0$. By letting $n$ go to $+\infty$ and then $p$ go to $+\infty$, we obtain 
\begin{equation}\label{sout}
 \lim_{n\rightarrow\infty}\sup_{s\leq t\leq s+T}|\frac{1}{\sqrt n}\varPsi_{\lfloor ns \rfloor,\lfloor nt \rfloor}-\varphi_{s,t}|=0\ a.s.
\end{equation}
We now show that 
\begin{equation}\label{sse}
\lim\limits_{\substack{n \to +\infty}}\frac{1}{n} T_{\lfloor ns \rfloor,\sqrt n x_n}=\tau_{s,x}\ a.s.
\end{equation}
We have $$\frac{1}{n} T_{\lfloor ns \rfloor,\sqrt n x_n}=\inf\{r\geq \frac{\lfloor ns \rfloor}{n}: S^n_r-S^n_s=-|x_n|\}.$$
For $\epsilon>0$, from $$\lim_{n\rightarrow\infty}\sup_{u\in [\tau_{s,x},\tau_{s,x}+\epsilon]}|(S^{n}_u-S^{n}_s+|x_n|)-(W_{s,u}+|x|)|=0,$$
we get $$\lim_{n\rightarrow\infty}\inf_{u\in [\tau_{s,x},\tau_{s,x}+\epsilon]}(S^{n}_u-S^{n}_s+|x_n|)=\inf_{u\in [\tau_{s,x},\tau_{s,x}+\epsilon]}(W_{s,u}+|x|)<0$$
which implies $\frac{1}{n} T_{\lfloor ns \rfloor,\sqrt n x_n}<\tau_{s,x}+\epsilon$ for $n$ large. If $x=0$, $\frac{1}{n}T_{\lfloor ns \rfloor,\sqrt n x_n}\geq \frac{\lfloor ns \rfloor}{n}$ entails obviously (\ref{sse}). If $x\neq0$, then working in $[s,\tau_{s,x}-\epsilon]$ as before and using $\inf_{u\in [s,\tau_{s,x}-\epsilon]}(W_u-W_s+|x|)>0$, we prove that $\frac{1}{n} T_{\lfloor ns \rfloor,\sqrt n x_n}\leq\tau_{s,x}-\epsilon$ for $n$ large which establishes (\ref{sse}).\\
Now 
\begin{equation}\label{legall}
\displaystyle\sup_{s\leq t\leq s+T}|\frac{1}{\sqrt n}\varPsi_{\lfloor ns \rfloor,\lfloor nt \rfloor}(\sqrt n x_n)-\varphi_{s,t}(x)|\leq \displaystyle\sup_{s\leq t\leq s+T}Q_{s,t}^{1,n}+\displaystyle\sup_{s\leq t\leq s+T}Q_{s,t}^{2,n}
\end{equation}
where
$$Q_{s,t}^{1,n}=|(x_n+\vec{e}(x_n)(S^n_t-S^n_s)){1}_{\{\lfloor nt \rfloor\leq T_{\lfloor ns \rfloor,\sqrt n x_n}\}}-(x+\vec{e}(x)W_{s,t}){1}_{\{t\leq\tau_{s,x}\}}|,$$
$$Q_{s,t}^{2,n}=|\frac{1}{\sqrt n}\varPsi_{\lfloor ns \rfloor,\lfloor nt \rfloor}{1}_{\{\lfloor nt \rfloor>T_{\lfloor ns \rfloor,\sqrt n x_n}\}}-\varphi_{s,t}{1}_{\{t>\tau_{s,x}\}}|.$$
By (\ref{sout}), (\ref{sse}) and the convergence of $\frac{1}{\sqrt n}S_{\lfloor n. \rfloor}$ towards $W$ on compact sets, the right-hand side of (\ref{legall}) converges to $0$ when $n\rightarrow +\infty$.
\end{proof}
\begin{rem}
From the definition of $\vec\varepsilon_{s,t}$ (or Proposition \ref{valerie}), it is obvious that\\ $\vec\varepsilon_{r_1,r_2},\cdots,\vec\varepsilon_{r_{m-1},r_m},W$ are independent for all $r_1<\cdots<r_m$. Using (\ref{rhayem}), we easily check that (i), (ii) and (iii) of Proposition \ref{karama} are satisfied for all $s<t, n\geq1, \{(s_i,t_i); 1\leq i\leq n\}$ with $s_i<t_i$ (the proof remains the same as Proposition \ref{karama}).  
\end{rem}
\begin{prop}
 $\varphi$ is the unique stochastic flow of mappings solution of $(T)$.
\end{prop}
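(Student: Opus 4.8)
The plan is to establish two facts and then quote Theorem~\ref{3li}: (a) the family $\varphi=\bigl(\varphi_{s,t}(x)\bigr)_{s\le t,\ x\in G}$ constructed above is a stochastic flow of mappings on $G$, and (b) the pair $(\varphi,W)$ solves $(T)$. Since Theorem~\ref{3li} asserts that there is a \emph{unique} flow of mappings solving $(T)$, (a) and (b) together identify $\varphi$ with it.

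For (a), joint measurability is clear from the explicit formula, and the continuity in probability $\varphi_{s,t}(x)\to x$ as $t\downarrow s$ follows from $\varphi_{s,t}(x)=x+\vec e(x)W_{s,t}$ for $t<\tau_{s,x}$ (with $\tau_{s,x}>s$ a.s.\ when $x\ne0$) and from $|\varphi_{s,t}(0)|=W^{+}_{s,t}\to0$. Stationarity and the independence of $\varphi_{t_{1},t_{2}}(\cdot),\dots,\varphi_{t_{n-1},t_{n}}(\cdot)$ over disjoint time intervals are inherited, through the a.s.\ convergence of Proposition~\ref{valerie}, from the corresponding properties of the discrete flow $\varPsi$ in Theorem~\ref{lamjad}(1), and are also visible on the joint law obtained in the Remark following Proposition~\ref{valerie}. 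The one genuinely combinatorial point is the cocycle identity $\varphi_{s,u}(x)=\varphi_{t,u}\bigl(\varphi_{s,t}(x)\bigr)$ a.s.\ for $s\le t\le u$. I would not try to obtain it by taking limits at a random starting point (the map $x\mapsto\varphi_{s,t}(x)$ is discontinuous at $0$), but argue directly from $\varphi_{s,t}(x)=(x+\vec e(x)W_{s,t})1_{\{t\le\tau_{s,x}\}}+\vec\varepsilon_{s,t}W^{+}_{s,t}1_{\{t>\tau_{s,x}\}}$: on $\{t<\tau_{s,x}\}$ one has $\vec e(\varphi_{s,t}(x))=\vec e(x)$, $|\varphi_{s,t}(x)|=|x|+W_{s,t}$ and $\tau_{t,\varphi_{s,t}(x)}=\tau_{s,x}$, so both sides agree by a one-line computation on the ray carrying $x$; once $0$ has been reached, both sides have the form $\vec\varepsilon_{\,\cdot,u}\,W^{+}_{\,\cdot,u}$, and the identity reduces to $W^{+}_{s,u}=W^{+}_{s,t}+W_{t,u}$ on the relevant event (where the running minima of $W$ over $[s,u]$ and over $[s,t]$ coincide) together with $\vec\varepsilon_{s,u}=\vec\varepsilon_{s,t}$ there, which is exactly \eqref{rhayem}. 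This is routine but tedious.

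For (b), the starting point is that for every $s\in\R$ and $x\in G$ the one-point motion $(\varphi_{s,t}(x))_{t\ge s}$ is a Walsh Brownian motion $W(\alpha_{1},\dots,\alpha_{N})$ started at $x$: away from $0$ it follows $x+\vec e(x)W_{s,t}$ up to the a.s.\ finite time $\tau_{s,x}$, afterwards it equals $\vec\varepsilon_{s,\cdot}W^{+}_{s,\cdot}$, which was already shown to be a Walsh Brownian motion from $0$ and whose excursion signs are, by \eqref{rhayem} and the law description of Proposition~\ref{karama}, i.i.d.\ $\sum_{i=1}^{N}\alpha_{i}\delta_{\vec e_{i}}$ and independent of $W$ and of the path before $\tau_{s,x}$; the strong Markov property at $\tau_{s,x}$ glues the two pieces. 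Moreover its modulus has the Skorokhod decomposition
$$|\varphi_{s,t}(x)|=|x|+W_{s,t}+L^{x}_{t},\qquad t\ge s,$$
with $L^{x}_{t}=\bigl(-|x|-\min_{r\in[s,t]}W_{s,r}\bigr)^{+}$ continuous nondecreasing, increasing only on $\{t:\varphi_{s,t}(x)=0\}$, and equal to the local time at $0$ of $|\varphi_{s,\cdot}(x)|$; this is read off the explicit formula. Now I would invoke the It\^o--Tanaka formula for Walsh Brownian motion: for $f\in C^{2}_{b}(G^{*})$, writing $f_{i}'(0^{+})$ for $\lim_{z\to0,\,z\in D_{i}}f'(z)$,
$$f(\varphi_{s,t}(x))=f(x)+\int_{s}^{t}f'(\varphi_{s,u}(x))\,1_{\{\varphi_{s,u}(x)\ne0\}}\,W(du)+\frac{1}{2}\int_{s}^{t}f''(\varphi_{s,u}(x))\,du+\Bigl(\sum_{i=1}^{N}\alpha_{i}f_{i}'(0^{+})\Bigr)L^{x}_{t},$$
the crucial point being that the martingale carrying the first integral is precisely the Brownian motion $W_{s,\cdot}$ from the Skorokhod decomposition above. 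When $f\in D(\alpha_{1},\dots,\alpha_{N})$ the last term vanishes, and since $\{u:\varphi_{s,u}(x)=0\}$ has zero Lebesgue measure the indicator can be dropped; what remains is exactly the identity defining $(T)$ for $K=\delta_{\varphi}$.

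It then remains to place this in a filtration for which $W$ is a white noise and each $\varphi_{s,\cdot}(x)$ is adapted, so that the stochastic integral in $(T)$ has its intended meaning. I would take $\mathcal F_{t}$ to be the completed $\sigma$-field generated by $\{\varphi_{r,u}(y):r\le u\le t,\ y\in G\}$. Then $W$ is $\mathcal F$-adapted, because $W_{s,t}=|\varphi_{s,t}(n\vec e_{1})|-n$ on $\{\tau_{s,n\vec e_{1}}>t\}$ with $\tau_{s,n\vec e_{1}}\to\infty$ a.s.; and $W$ is an $\mathcal F$-white noise, because by the law description (the Remark after Proposition~\ref{valerie}) each $\varphi_{r,u}(y)$ with $u\le t$ is a measurable function of $W$ restricted to $(-\infty,t]$ and of the flips $\{\vec\varepsilon_{a,b}:b\le t\}$, while the increments of $W$ after $t$ are independent of $W$ restricted to $(-\infty,t]$ and of all the flips. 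I expect this last step — reconciling the abstract requirements on a stochastic flow and its driving noise with the concrete It\^o--Tanaka calculus for Walsh Brownian motion, with $W$ itself as the driving noise — to be the main obstacle; everything else is either a limiting argument from $\varPsi$ or a finite computation on the rays. With (a) and (b) secured, Theorem~\ref{3li} finishes the proof.
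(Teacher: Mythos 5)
Your proposal is essentially correct, and its two substantive verifications coincide with the paper's: the cocycle identity is obtained exactly as in the paper, by a case analysis on the explicit representation $\varphi_{s,t}(x)=(x+\vec e(x)W_{s,t})1_{\{t\le\tau_{s,x}\}}+\vec\varepsilon_{s,t}W^{+}_{s,t}1_{\{t>\tau_{s,x}\}}$ together with (\ref{rhayem}) (the paper spells out four events, including the one you compress, where the intermediate point returns to $0$ and one needs $\min_{s,u}=\min_{t,u}$ rather than $\min_{s,u}=\min_{s,t}$); and the verification of $(T)$ rests, as in the paper, on the Freidlin--Sheu formula with the identification $B_t=|\varphi_{0,t}|-\tilde L_t=W_t$. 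Where you diverge is twofold. First, for a general starting point $x\ne0$ the paper does not invoke a Freidlin--Sheu/It\^o--Tanaka formula from $x$ (which would require first establishing that $(\varphi_{s,t}(x))_{t\ge s}$ is a Walsh Brownian motion from $x$ --- a step you assert via a strong Markov gluing but do not prove); instead it applies classical It\^o's formula to a $C^2$ extension of $f$ on the ray of $x$ up to $\tau_{0}(x)$ and glues at that time with the formula for $\varphi_{0,\cdot}(0)$, which is lighter. Second, you settle uniqueness by quoting Theorem \ref{3li}; this is logically legitimate, since that theorem is stated earlier (imported from \cite{MR5010101}) and is not proved via the present construction, so no circularity arises. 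The paper instead reproves uniqueness self-containedly: for an arbitrary flow of mappings solution $\psi$ it uses Lemma 6 of \cite{MR5010101} to get $\psi_{0,t}(x)=x+\vec e(x)W_{0,t}$ up to $\tau_{0,x}$, recovers $W$ from $\psi$, filters $\delta_\psi$ with respect to $\sigma(W)$ to identify the resulting Wiener flow with $K^W$, deduces $|\psi_{0,t}(0)|=W^+_{0,t}$, and then, via a strong Markov lemma, shows $\psi_{0,\cdot}(x)$ is a measurable function of $\psi_{0,\cdot}(0)$, so the finite-dimensional law of any solution is determined. Your route is shorter; the paper's buys an explicit identification of the law of any solution (in particular $K^W_{s,t}(x)=E[\delta_{\psi_{s,t}(x)}|\sigma(W)]$) and keeps the argument independent of the uniqueness proof in \cite{MR5010101}. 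Your closing discussion of filtrations and the white-noise property of $W$ is sound but is not needed in the paper's treatment, where the stochastic integrals are given meaning through the identification $B=W$ in the Freidlin--Sheu decomposition.
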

\begin{proof}
Fix $s<t<u, x\in G$ and let prove that $\varphi_{s,u}(x)=\varphi_{t,u}\circ\varphi_{s,t}(x)$ a.s.
We follow Lemma 4.3 \cite{MR2235172} and denote $\tau_{s,x}$ by $\tau_{s}(x)$. All the equalities below hold a.s.\\
 On the event $\{u<\tau_{s}(x)\},\varphi_{s,t}(x)=x+\vec{e}(x)W_{s,t},\tau_{t}(\varphi_{s,t}(x))=\tau_{s}(x)<u$  and  
$$\varphi_{t,u}\circ\varphi_{s,t}(x)=x+\vec{e}(x)(W_{s,t}+W_{t,u})=x+\vec{e}(x)W_{s,u}=\varphi_{s,u}(x).$$
On the event $\{\tau_{s}(x)\in]t,u]\}$, we still have $\varphi_{s,t}(x)=x+\vec{e}(x)W_{s,t}$ and $\tau_{t}(\varphi_{s,t}(x))=\tau_{s}(x)\leq u$,
thus $$\varphi_{t,u}\circ\varphi_{s,t}(x)=\vec\varepsilon_{t,u}W^{+}_{t,u}=\vec\varepsilon_{s,u}W^{+}_{s,u}=\varphi_{s,u}(x).$$
since on the event $\{\tau_{s}(x)\in]t,u]\}, \textrm{min}_{s,u}=\textrm{min}_{t,u}$ and $W^{+}_{s,u}=W_{u}-\textrm{min}_{s,u}=W^{+}_{t,u}.$

On the event $\{\tau_{s}(x)\leq t\}\cap \{\tau_{t}(\varphi_{s,t}(x))\leq u\}, \varphi_{s,t}(x)=\vec\varepsilon_{s,t}W^{+}_{s,t}$ and
$$\varphi_{t,u}\circ\varphi_{s,t}(x)=\varphi_{t,u}(\vec\varepsilon_{s,t}W^{+}_{s,t})=\vec\varepsilon_{t,u}W^{+}_{t,u}=\vec\varepsilon_{s,u}W^{+}_{s,u}=\varphi_{s,u}(x)$$
since $W^{+}_{s,\tau_{t}(\varphi_{s,t}(x))}=0$ and thus $\textrm{min}_{s,u}=\textrm{min}_{t,u}$ which implies $\vec\varepsilon_{s,u}=\vec\varepsilon_{t,u}$ and
$W^{+}_{s,u}=W^{+}_{t,u}.$

On the event $\{\tau_{s}(x)\leq t\}\cap \{\tau_{t}(\varphi_{s,t}(x))> u\}, \varphi_{s,t}(x)=\vec\varepsilon_{s,t}W^{+}_{s,t}$ and
$$\varphi_{t,u}\circ\varphi_{s,t}(x)=\varphi_{t,u}(\vec\varepsilon_{s,t}W^{+}_{s,t})=\vec\varepsilon_{s,t}(W^{+}_{s,t}+W_{t,u})=\vec\varepsilon_{s,u}W^{+}_{s,u}=\varphi_{s,u}(x).$$
since in this case  $\textrm{min}_{s,u}=\textrm{min}_{s,t}$ which implies $\vec\varepsilon_{s,u}=\vec\varepsilon_{s,t}$ and
$$\begin{array}{ll}
W^{+}_{s,u}&=W_{u}-\textrm{min}_{s,u}\\
&=W_{u}-W_{s}+W_{s}-\textrm{min}_{s,t}\\
&=W^{+}_{s,t}+W_{t,u}.
\end{array}$$
Thus we have, a.s. $\varphi_{s,u}(x)=\varphi_{t,u}\circ\varphi_{s,t}(x)$ which proves the cocyle property for $\varphi$. It is now easy to check that $\varphi$ is a stochastic flow of mappings in the sense of Definition 4 \cite{MR5010101}. 

Note that $(\varphi_{0,t},t\geq 0)$ is an $W(\alpha_1,\cdots,\alpha_N)$ started at $0$ and therefore satisfies Freidlin-Sheu formula (Theorem 3 \cite{MR5010101}). Let $f\in D(\alpha_1,\cdots,\alpha_N)$, then for all $t\geq0$,
$$f(\varphi_{0,t})=f(0)+\int_0^tf'(\varphi_{0,u})dB_u+\frac{1}{2}\int_0^t f''(\varphi_{0,u})du\ \ a.s.$$
where $B_t=|\varphi_{0,t}|-\tilde L_t(|\varphi_{0,.}|)$ and $\tilde L_t(|\varphi_{0,.}|)$ is the symmetric local time at $0$ of $|\varphi_{0,.}|$. Since $|\varphi_{0,t}|=W_t-\textrm{min}_{0,t}$, we get $B_t=W_t$. Let $x\in D_i\setminus\{0\}$ and $f_i(r)=f(r\vec{e}_i), r\geq0$. Since $\lim_{z\rightarrow0, z\in D_i, z\neq 0}f'(z)$ and $\lim_{z\rightarrow0, z\in D_i, z\neq 0}f''(z)$ exist, we can construct $g$ which is $C^2$ on $\R$ and coincides with $f_i$ on $\R_+$. By It\^o's formula 
$$g(|x|+W_t)=g(|x|)+\int_0^tg'(|x|+W_u)dW_u+\frac{1}{2}\int_0^t g''(|x|+W_u)du\ \ a.s.$$
and so for $t\leqslant \tau_{0}(x)$, we have
$$f(\varphi_{0,t}(x))=f(x)+\int_0^tf'(\varphi_{0,u}(x))dW_u+\frac{1}{2}\int_0^t f''(\varphi_{0,u}(x))du\ \ a.s.$$
Set $\alpha=f(0)+\int_0^{\tau_{0}(x)}f'(\varphi_{0,u})dW_u+\frac{1}{2}\int_0^{\tau_{0}(x)} f''(\varphi_{0,u})du=f(\varphi_{0,\tau_0(x)})=f(0)$ since $W^+_{0,\tau_0(x)}=0$. Then for $t>\tau_{0}(x)$, write
\begin{eqnarray}
 f(\varphi_{0,t}(x))=f(\varphi_{0,t})&=&\alpha+\int_{\tau_{0}(x)}^tf'(\varphi_{0,u})dW_u+\frac{1}{2}\int_{\tau_{0}(x)}^t f''(\varphi_{0,u})du\nonumber\\
&=&f(0)+\int_{\tau_{0}(x)}^tf'(\varphi_{0,u}(x))dW_u+\frac{1}{2}\int_{\tau_{0}(x)}^t f''(\varphi_{0,u}(x))du.\nonumber\
\end{eqnarray}
But $f(x)+\int_0^{\tau_{0}(x)}f'(\varphi_{0,u}(x))dW_u+\frac{1}{2}\int_0^{\tau_{0}(x)}f''(\varphi_{0,u}(x))du=f(\varphi_{0,\tau_0(x)}(x))=f(0)$ and so, for all $t\geq0, f\in D(\alpha_1,\cdots,\alpha_N), x\in G$,
\begin{equation}\label{comes}
f(\varphi_{0,t}(x))=f(x)+\int_0^tf'(\varphi_{0,u}(x))dW_u+\frac{1}{2}\int_0^t f''(\varphi_{0,u}(x))du\ \ a.s.
\end{equation}
Now, let $(\psi,W)$ be a any flow of mappings solution of $(T)$. Lemma 6 \cite{MR5010101} implies
\begin{equation}\label{mak}
 \psi_{0,t}(x)=x+\vec{e}(x)W_{0,t}\ \textrm{for}\ 0\leq t\leq\tau_{0,x}\ \textrm{with}\ \tau_{0,x}\ \textrm{given by (\ref{didi})}.
 \end{equation}
By considering a sequence $(x_k)_{k\geq0}$ converging to $\infty$, this shows that $\sigma(W_t)\subset\sigma(\psi_{0,t}(y), y\in G)$. Therefore, we can define a Wiener stochastic flow $K^{*}$ obtained by filtering $\delta_{\psi}$ with respect to $\sigma(W)$ (Lemma 3-2 (ii) in \cite{MR2060298}) satisfying: $\forall s\leq t, x\in G, K_{s,t}^{*}(x)=E[\delta_{\psi_{s,t}(x)}|\sigma(W)]\  a.s$. In particular $K^{*}$ solves $(T)$ and since $K^{W}$ given by (\ref{loop})
is the unique Wiener solution of $(T)$, we get: $\forall s\leq t, x\in G,\ \ K_{s,t}^{W}(x)=E[\delta_{\psi_{s,t}(x)}|\sigma(W)]\  a.s.$ (see Proposition 8 \cite{MR5010101}). As $K_{0,t}^W(0)$ is supported on $\{W^+_{0,t}\vec{e}_i,\ \ 1\leq i\leq N\}$, we deduce that $|\psi_{0,t}(0)|=W^+_{0,t}$. Combining this with (\ref{mak}), we see that 
$$\inf\{r\geq0: \psi_{0,r}(x)=\psi_{0,r}(0)\}=\tau_{0,x}.$$
This implies $\psi_{0,r}(x)=\psi_{0,r}(0)$ for all $r\geq\tau_{0,x}$ by applying the following
\begin{lemma}
For all $(x_1,\cdots,x_n)\in G^n,$ denote by $\mathbb P_{x_1,\cdots,x_n}$ the law of $(\psi_{0,.}(x_1),\cdots,\psi_{0,.}(x_n))$ in $C(\R_+,G^n)$. Let $T$ be a finite $(\mathcal F_t)$ stopping time where $\mathcal F_t=\sigma(\psi_{0,u}, u\leq t), t\geq0$. Then the law of $(\psi_{0,T+.}(x_1),\cdots,\psi_{0,T+.}(x_n))$ knowing $\mathcal F_T$ is given by $\mathbb P_{\psi_{0,T}(x_1),\cdots,\psi_{0,T}(x_n)}$.
\end{lemma}
Note that $W_{0,.}$ can be recovered out from $W_{0,.}^+$ and consequently $\psi_{0,.}(x)$ is a measurable function of $\psi_{0,.}(0)$ for all $x\in G$. Therefore, for all $(x_1,\cdots,x_n)\in G^n$, $(\psi_{0,\cdot}(x_1),\cdots,\psi_{0,\cdot}(x_n))$ is unique in law since $\psi_{0,\cdot}(0)$ is a Walsh Brownian motion. This completes the proof.
\end{proof}
\subsubsection{The Wiener flow.}
Remark that $K_{s,t}^W(x)=E[\delta_{\varphi_{s,t}(x)}|\sigma(W)]$ which entails that $K^W$ is a stochastic flow of kernels. By conditioning with respect to $\sigma(W)$ in (\ref{comes}), we easily see that $(K^W,W)$ solves $(T)$. In order to finish the proof of Theorem \ref{lamjad} and Corollary \ref{bess}, we need only check the following lemma (the proof of (\ref{thh}) is similar)
\begin{lemma} Under the hypothesis of Proposition \ref{valerie}, we have
$$\displaystyle{\sup_{t\in [s,s+T]}}\beta (K_{s,t}^W(x),K_{s,t}^n(\sqrt{n}x_n))\xrightarrow[{\text{$n\rightarrow +\infty $}}]{} 0\ \ \textrm{a.s.}$$
\end{lemma}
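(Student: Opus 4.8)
The plan is to combine the explicit expressions for the two families of kernels with a splitting of the interval $[s,s+T]$ according to whether the moving particle has already reached $0$. First I would dispose of the linear interpolation hidden in $K^n$: writing $\widehat\varPsi^n_{s,t}(x):=\frac1{\sqrt n}\varPsi_{\lfloor ns\rfloor,\lfloor nt\rfloor}(\sqrt n x_n)$, the interpolated path $t\mapsto\varPsi^n_{s,t}(\sqrt n x_n)$ and its piecewise constant version $t\mapsto\widehat\varPsi^n_{s,t}(x)$ stay at $d$-distance $\le 1/\sqrt n$ (one step of the chain, possibly through the origin), so by $\beta(\delta_a,\delta_b)\le d(a,b)$ and $\beta(E[\mu\,|\,\mathcal G],E[\nu\,|\,\mathcal G])\le E[\beta(\mu,\nu)\,|\,\mathcal G]$ the measure $K^n_{s,t}(\sqrt n x_n)$ differs from $\widehat K^n_{s,t}:=E[\delta_{\widehat\varPsi^n_{s,t}(x)}\,|\,\sigma(S)]$ by at most $1/\sqrt n$ in $\beta$, uniformly in $t$. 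By Proposition \ref{thar} (i) and (iii), $\widehat K^n_{s,t}=\delta_{\widehat\varPsi^n_{s,t}(x)}$ when $\lfloor nt\rfloor\le T_{\lfloor ns\rfloor,\sqrt n x_n}$ and $\widehat K^n_{s,t}=\sum_{i=1}^N\alpha_i\delta_{\vec{e}_i|\widehat\varPsi^n_{s,t}(x)|}$ otherwise; by (\ref{loop}) and the definition of $\varphi_{s,t}(x)$, $K^W_{s,t}(x)=\delta_{\varphi_{s,t}(x)}$ for $t\le\tau_{s,x}$ and $K^W_{s,t}(x)=\sum_{i=1}^N\alpha_i\delta_{\vec{e}_i|\varphi_{s,t}(x)|}$ for $t>\tau_{s,x}$ (with $|\varphi_{s,t}(x)|=W^+_{s,t}$ there). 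I will also use repeatedly the elementary inequalities, all read off from the definition of $\beta$: $\beta(\delta_a,\delta_b)\le d(a,b)$; $\beta(\sum_i\alpha_i\delta_{\vec{e}_i r},\sum_i\alpha_i\delta_{\vec{e}_i r'})\le|r-r'|$ for $r,r'\ge0$; $\beta(\delta_a,\sum_i\alpha_i\delta_{\vec{e}_i r})\le|a|+r$; and $\big||a|-|b|\big|\le d(a,b)$.

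Next I would introduce the phase-mismatch set $I_n:=\{t\in[s,s+T]:\ 1_{\{t\le\tau_{s,x}\}}\neq 1_{\{\lfloor nt\rfloor\le T_{\lfloor ns\rfloor,\sqrt n x_n}\}}\}$, an interval with endpoints $\tau_{s,x}$ and $(T_{\lfloor ns\rfloor,\sqrt n x_n}+1)/n$; by (\ref{sse}) the second endpoint converges a.s. to $\tau_{s,x}$, so a.s.\ $I_n$ shrinks to $\{\tau_{s,x}\}$. On $[s,s+T]\setminus I_n$ the two measures are of the same type: either both are Dirac masses, giving $\beta(K^W_{s,t}(x),\widehat K^n_{s,t})\le d(\widehat\varPsi^n_{s,t}(x),\varphi_{s,t}(x))$, or both have the form $\sum_i\alpha_i\delta_{\vec{e}_i(\cdot)}$, giving $\beta(K^W_{s,t}(x),\widehat K^n_{s,t})\le\big||\widehat\varPsi^n_{s,t}(x)|-|\varphi_{s,t}(x)|\big|\le d(\widehat\varPsi^n_{s,t}(x),\varphi_{s,t}(x))$. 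In both cases Proposition \ref{valerie} gives $\sup_{t\in[s,s+T]\setminus I_n}\beta(K^W_{s,t}(x),\widehat K^n_{s,t})\to 0$ a.s.

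On $I_n$, exactly one of the two measures is a Dirac mass and the other a convex combination $\sum_i\alpha_i\delta_{\vec{e}_i(\cdot)}$, so for $t\in I_n$ the bounds above yield $\beta(K^W_{s,t}(x),\widehat K^n_{s,t})\le|\varphi_{s,t}(x)|+|\widehat\varPsi^n_{s,t}(x)|\le 2|\varphi_{s,t}(x)|+\sup_{u\in[s,s+T]}d(\widehat\varPsi^n_{s,u}(x),\varphi_{s,u}(x))$. The supremum tends to $0$ a.s.\ by Proposition \ref{valerie}. For the other term, $t\mapsto\varphi_{s,t}(x)$ is continuous, $I_n$ collapses a.s.\ to $\{\tau_{s,x}\}$, and $\varphi_{s,\tau_{s,x}}(x)=0$ (if $x\neq0$ then $W_{s,\tau_{s,x}}=-|x|$ so $\varphi_{s,\tau_{s,x}}(x)=x+\vec{e}(x)W_{s,\tau_{s,x}}=0$, while if $x=0$ then $\tau_{s,x}=s$ and $\varphi_{s,s}(0)=0$); hence $\sup_{t\in I_n}|\varphi_{s,t}(x)|\to 0$ a.s., so $\sup_{t\in I_n}\beta(K^W_{s,t}(x),\widehat K^n_{s,t})\to0$ a.s. Combining this with the previous step and the $1/\sqrt n$ interpolation error proves the lemma; (\ref{thh}) is obtained in the same way.

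The step I expect to be delicate is the analysis on $I_n$: one needs simultaneously that $I_n$ collapses — which is exactly (\ref{sse}) — and that the ``type mismatch'' between $\delta_{(\cdot)}$ and $\sum_i\alpha_i\delta_{\vec{e}_i(\cdot)}$ costs nothing in the limit. The latter works precisely because on $I_n$ one sits near the hitting time of $0$, where $\varphi_{s,\cdot}(x)$, and hence also $\widehat\varPsi^n_{s,\cdot}(x)$ by Proposition \ref{valerie}, is close to $0$, so both measures are close to $\delta_0$.
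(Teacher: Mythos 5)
Your proof is correct and follows essentially the same strategy as the paper's: split the time interval according to whether the two indicators $1_{\{t\le\tau_{s,x}\}}$ and $1_{\{\lfloor nt\rfloor\le T_{\lfloor ns\rfloor,\sqrt n x_n}\}}$ agree, use Lipschitz-type bounds on $\beta$ where they agree, and on the mismatch window (which collapses to $\{\tau_{s,x}\}$ by (\ref{sse})) use $|g(y)|\le|y|$ together with the fact that both paths are close to $0$ near the hitting time. The only cosmetic difference is that you phrase the estimates via $\beta$-inequalities, Proposition \ref{valerie} and continuity of $\varphi_{s,\cdot}(x)$ at $\tau_{s,x}$, while the paper tests against a generic Lipschitz function $g$ and bounds $V^{1,n}_{s,t}+V^{2,n}_{s,t}$ directly from the convergence of $S^{(n)}$ and $S^+$ (handling the interpolation through the term $o_n$, as you do with the $1/\sqrt n$ correction).
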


\noindent\begin{proof}
Let $ g:G\longrightarrow \R$ such that $\| g\|_\infty+\displaystyle{\sup_{x \not =y}}\frac {|g(x)-g(y)|}{|x-y|}\leq 1, g(0) = 0$. Then, $$\left|\displaystyle\int_{G}g(y)K_{s,t}^W(x)(dy)-\displaystyle\int_{G}g(y)K_{s,t}^n(\sqrt{n}x_n)(dy)\right|\leq V_{s,t}^{1,n}+V_{s,t}^{2,n}$$
where
$$V_{s,t}^{1,n}=\left|g(x_n+\vec{e}(x_n)S_{s,t}^n){1}_{\{\lfloor nt \rfloor\leq T_{\lfloor ns \rfloor,\sqrt n x_n}\}}-g(x+\vec{e}(x)W_{s,t}){1}_{\{t\leq\tau_{s,x}\}}\right|,$$
$$V_{s,t}^{2,n}=\displaystyle\sum_{j=1}^{N}\alpha_j\left|g(\vec{e}_jW_{s,t}^{+}) 1_{\{t> \tau_{s,x}\}}-g(\vec{e}_jS_{n,s,t}^{+}+o_n){1}_{\{\lfloor nt \rfloor>T_{\lfloor ns \rfloor,\sqrt n x_n}\}}\right|$$
and $o_n\in G$ is a $\sigma(S)$ measurable random variable such that $|o_n|\leq\frac{1}{\sqrt{n}}$, $S_{s,t}^n=S^n_t-S^n_s,\ \ S_{n,s,t}^+ = \frac{1}{\sqrt n} S^+_{\lfloor ns \rfloor,\lfloor nt \rfloor}$. As $\lfloor x\rfloor-1\leq x\leq \lfloor x\rfloor+1$ for all $x\in\R$, we get
$$V_{s,t}^{1,n}\leq \displaystyle{\sup_{t\in I_{n,s,x}}}|x_n+\vec{e}(x_n)S_{s,t}^{(n)}-x-\vec{e}(x)W_{s,t}|+\displaystyle{\sup_{t\in J_{n,s,x}}}|g(x_n+\vec{e}(x_n)S_{s,t}^{(n)})|+\displaystyle{\sup_{t\in K_{n,s,x}}}|g(x+\vec{e}(x)W_{s,t})|$$
\noindent with\ \ \ \ \ \ \ \ \ \ \ \ \ \ \ \ \ \ \ \ \ \ \ \ \ \ \ \ \  $I_{n,s,x}=[s,\tau _{s,x}\vee(\frac{1}{n}+\frac{1}{n} T_{\lfloor ns \rfloor,\sqrt n x_n})],$\\
$$J_{n,s,x}=[\tau _{s,x}, (\frac{1}{n} T_{\lfloor ns \rfloor,\sqrt n x_n}+\frac{1}{n})\vee\tau_{s,x}],\ \  K_{n,s,x}=[\tau _{s,x}\land(\frac{1}{n} T_{\lfloor ns \rfloor,\sqrt n x_n}-\frac{1}{n}),\tau _{s,x}].$$
Using $|g(y)|\leq |y|$, we obtain
$$\displaystyle{\sup_{t\in J_{n,s,x}}}|g(x_n+\vec{e}(x_n)S_{s,t}^{(n)})|+\displaystyle{\sup_{t\in K_{n,s,x}}}|g(x+\vec{e}(x)W_{s,t})|\leq \displaystyle{\sup_{t\in J_{n,s,x}}}||x_n|+S_{s,t}^{(n)}|+\displaystyle{\sup_{t\in K_{n,s,x}}}||x|+W_{s,t}|.$$
Since $\lim\limits_{\substack{n \to +\infty}}\frac{1}{n} T_{\lfloor ns \rfloor,\sqrt n x_n}=\tau_{s,x}$ a.s., the right-hand side converges to $0$. By discussing the cases $x=0, x\neq0$, we easily see that $\lim_{n\rightarrow\infty}\displaystyle{\sup_{t\in I_{n,s,x}}}|x_n+\vec{e} (x_n)S_{s,t}^{(n)}-x-\vec{e}(x)W_{s,t}|=0$ and therefore $\lim_{n\rightarrow\infty}\displaystyle{\sup_{t\in [s,s+T]}}V_{s,t}^{1,n}=0$. By the same manner, we arrive at  $\lim_{n\rightarrow\infty}\displaystyle{\sup_{t\in [s,s+T]}}V_{s,t}^{2,n}=0$ which proves the lemma.
\end{proof}

%
\begin{acknowledgement}
I sincerely thank Yves Le Jan, Olivier Raimond and Sophie Lemaire for very useful discussions. I am also grateful to the referee for his helpful comments.
\end{acknowledgement}
%
%
%

\end{document}